

\documentclass[oneside,leqno,11pt]{article}
\usepackage{amssymb,amsmath,latexsym, stmaryrd,amsthm}
\usepackage{amssymb,amsmath,latexsym}

\setlength{\hoffset}{-1in}
\setlength{\voffset}{-1.5in}
\setlength{\oddsidemargin}{1in}
\setlength{\evensidemargin}{1in}
\setlength{\textwidth}{6.5in}
\setlength{\textheight}{8.5in}
\setlength{\topmargin}{1in}
\setlength{\baselineskip}{14pt}
\setlength{\parskip}{6pt}



\def\gg{\mathfrak{g}}
\def\ggl{\mathfrak{gl}}
\def\gh{\mathfrak{h}}

\def\gl{\mathfrak{l}}
\def\gm{\mathfrak{m}}

\def\gp{\mathfrak{p}}

\def\gr{\mathfrak{r}}
\def\gs{\mathfrak{s}}
\def\gsl{\mathfrak{sl}}
\def\gso{\mathfrak{so}}
\def\gsu{\mathfrak{su}}
\def\gsp{\mathfrak{sp}}

\def\gu{\mathfrak{u}}





\def\Tr{{\rm Tr}}



\def\C{\mathbb{C}}
\def\D{\mathbb{D}}

\def\F{\mathbb{F}}

\def\H{\mathbb{H}}

\def\R{\mathbb{R}}

\def\Z{\mathbb{Z}}


\def\cC{\mathcal{C}}

\def\cF{\mathcal{F}}
\def\cG{\mathcal{G}}


\newtheorem{theorem}[equation]{Theorem}

\newtheorem{lemma}[equation]{Lemma}

\newtheorem{definition}[equation]{Definition}

\newtheorem{remark}[equation]{Remark}

\title{Parabolic Subgroups of Real Direct Limit Lie Groups}

\begin{document}
\author{Elizabeth Dan-Cohen\thanks{Research partially supported by DFG Grant PE 980/2-1.},
\, Ivan Penkov\thanks{Research partially supported by  DFG Grant  PE 980/2-1.}
\, \& Joseph A. Wolf\thanks{Research partially supported by NSF Grant
DMS 06 52840.
}}
\date{31 December 2008}

\maketitle

\begin{abstract}
Let $G_\R$ be a classical real direct limit Lie group, and $\gg_\R$ its 
Lie algebra.  The parabolic subalgebras of the complexification 
$\gg_\C$ were described by the first two authors.  In the present paper 
we extend these results to $\gg_\R$.  This also gives a description of 
the parabolic subgroups of $G_\R$.  Furthermore, we give 
a geometric criterion for a parabolic subgroup $P_\C$ of $G_\C$ to 
intersect $G_\R$ in a parabolic subgroup.  This criterion involves the
$G_\R$--orbit structure of the flag ind--manifold $G_\C/P_\C$.

\vspace{10pt}
\noindent MSC 2000 : 17B05; 17B65.
\end{abstract}

\section{Introduction and Basic Definitions} \label{sec1}
\setcounter{equation}{0}

We start with the three classical simple locally finite countable--dimensional Lie 
algebras $\gg_\C = \varinjlim \gg_{n,\C}$, and their real forms
$\gg_\R$.  The Lie algebras $\gg_\C$ are the classical direct limits,
$\gsl(\infty,\C) = \varinjlim \gsl(n;\C)$,
$\gso(\infty,\C) = \varinjlim \gso(2n;\C) = \varinjlim \gso(2n+1;\C)$, and
$\gsp(\infty,\C) = \varinjlim \gsp(n;\C)$,
where the direct systems are
given by the inclusions of the form
$A \mapsto (\begin{smallmatrix} A & 0 \\ 0 & 0 \end{smallmatrix} )$.
See \cite{B} or \cite{BS}.
We often consider the locally reductive algebra 
$\ggl(\infty;\C) = \varinjlim \ggl(n;\C)$ along with $\gsl(\infty;\C)$.

The real forms of these classical simple locally finite countable--dimensional
complex Lie algebras $\gg_\C$ have been classified by A. Baranov in \cite{B}. 
A slight reformulation of \cite[Theorem 1.4]{B} 
says that the following is a complete list of the real forms of $\gg_\C$.

\noindent
\underline{If  $\gg_\C = \gsl(\infty;\C)$},  then $\gg_\R$ is one of the 
following:

$\gsl(\infty;\R) = \varinjlim \gsl(n;\R)$, the real 
special linear Lie algebra, 

$\gsl(\infty;\H) = \varinjlim \gsl(n;\H)$, the quaternionic 
special linear Lie algebra, where
$\gsl(n;\H) := \ggl(n;\H) \cap \gsl(2n;\C)$,  

$\gs\gu(p,\infty) = \varinjlim \gs\gu(p,n)$, the complex special 
unitary Lie algebra of finite real rank $p$,

$\gs\gu(\infty,\infty) = \varinjlim \gs\gu(p,q)$, the complex
special unitary Lie algebra of infinite real rank.
\medskip

\noindent
\underline{If  $\gg_\C = \gso(\infty;\C)$},  then $\gg_\R$ is one of the following:

$\gso(p,\infty) = \varinjlim \gso(p,n)$, the real orthogonal
Lie algebra of finite real rank $p$,

$\gso(\infty,\infty) = \varinjlim \gso(p,q)$, the real
orthogonal Lie algebra of infinite real rank,

$\gso^*(2\infty) = \varinjlim \gso^*(2n)$, 
with
$\gso^*(2n) = \{\xi \in \gsl(n;\H) \mid \kappa_n(\xi x,y) +\kappa_n(x, \xi y)=0 \  \forall x,y \in \H^n \}$, where $ \kappa_n(x,y) := \sum_\ell x^\ell i \bar y^\ell = {}^tx i \bar y$.  Equivalently, $\gso^*(2n) = \gso(2n;\C)\cap 
\gu(n,n)$ with $\gso(2n;\C)$ defined by 
$(u,v) = \sum_1^n (u_{2j-1} v_{2j} + u_{2j}w_{2j-1})$ and $\gu(n,n)$ by
$\langle u,v\rangle = 
\sum_1^n (u_{2j-1} \overline{v_{2j-1}}-u_{2j}\overline{v_{2j}})$. 
\medskip

\noindent
\underline{If  $\gg_\C = \gsp(\infty;\C)$},  then $\gg_\R$ is one of the following:

$\gsp(\infty;\R) = \varinjlim \gsp(n;\R)$, the real
symplectic Lie algebra,

$\gsp(p,\infty) = \varinjlim \gsp(p,n)$, the quaternionic unitary
Lie algebra of finite real rank $p$,

$\gsp(\infty,\infty) = \varinjlim \gsp(p,q)$, the quaternionic
unitary Lie algebra of infinite real rank.
\medskip

\noindent
\underline{If  $\gg_\C = \ggl(\infty;\C)$},  then $\gg_\R$ is one of the following:

$\ggl(\infty;\R) = \varinjlim \ggl(n;\R)$, the real
general linear Lie algebra,

$\ggl(\infty;\H) = \varinjlim \ggl(n;\H)$, the quaternionic
general linear Lie algebra,

$\gu(p,\infty) = \varinjlim \gu(p,n)$, the complex unitary
Lie algebra of finite real rank $p$,

$\gu(\infty,\infty) = \varinjlim \gu(p,q)$, the complex
unitary Lie algebra of infinite real rank.
\medskip

The \emph{defining representations} of $\gg_\C$ are characterized as direct limits of minimal--dimensional nontrivial representations of simple subalgebras.  It is well known that that $\gsl(\infty;\C)$ and $\ggl(\infty;\C)$ have two inequivalent defining representations $V$ and $W$, whereas each of $\gso (\infty;\C)$ and $\gsp(\infty;\C)$ has only one (up to equivalence) $V$.  In particular 
the restrictions to $\gso(\infty;\C)$ or $\gsp(\infty;\C)$ of the two defining 
representations of $\gsl(\infty;\C)$ are equivalent.
The real forms $\gg_\R$ listed above also have \emph{defining representations}, as detailed below, which are particular restrictions of the defining representations of $\gg_\C$.  
We denote an element of $\Z_{\geqq 0} \cup \{ \infty \}$ by $*$.

Suppose that $\gg_\R$ is $\gsl(\infty;\R)$ or $\ggl(\infty;\R)$.  The defining representation spaces of $\gg_\R$ are the finitary (i.e.\ with finitely many nonzero entries) column vectors $V_\R = \R^\infty$ and the finitary row vectors $W_\R = \R^\infty$.   The algebra of $\gg_\R$--endomorphisms of $V_\R$ or $W_\R$ is $\R$.  The restriction of the pairing of $V$ and $W$ is a nondegenerate $\gg_\R$--invariant $\R$--bilinear pairing of $V_\R$ and $W_\R$. 

The defining representation space $V_\R$ of $\gg_\R = \gso(*,\infty)$ 
consists of the finitary real column vectors.  
The algebra of $\gg_\R$--endomorphisms of $V_\R$ 
(the commuting algebra) is $\R$.  The restriction of the symmetric form on $V$ to $V_\R$ is a nondegenerate $\gg_\R$--invariant symmetric $\R$--bilinear form. 

The defining representation space $V_\R$ of $\gg_\R = \gsp(\infty;\R)$ 
consists of the finitary real column vectors.  The algebra of $\gg_\R$--endomorphisms of $V_\R$ is $\R$.  The restriction of the antisymmetric form on $V$ to $V_\R$ is a nondegenerate $\gg_\R$--invariant antisymmetric $\R$--bilinear form.  

In both of these cases the defining representation of $\gg_\R$ is a real form of the defining representation of $\gg_\C$, i.e.\ $V = V_\R \otimes \C$.

Suppose that $\gg_\R$ is $\gsu(*,\infty)$ or $\gu(*,\infty)$.  Then $\gg_\R$ has 
two defining representations, one on the space $V_\R = \C^{*,\infty}$
of finitary complex column vectors and the other on the space
$W_\R$ of finitary complex row vectors.  Thus the two 
defining representations of $\gg_\C$ remain irreducible as a representations
of $\gg_\R$,
the respective algebras of $\gg_\R$--endomorphisms of $V_\R$ and $W_\R$ 
are $\C$, and $V = V_\R$ and $W = W_\R$.  The pairing of $V$ and $W$ defines
a $\gg_\R$--invariant hermitian form of signature $(*,\infty)$ on $V_\R$.  

Suppose that $\gg_\R$ is $\gsl(\infty;\H)$ or $\ggl(\infty;\H)$.  The two defining 
representation spaces of $\gg_\R$ consist of the finitary column vectors $V_\R = \H^\infty$ and finitary row vectors $W_\R = \H^\infty$.  The algebra of $\gg_\R$--endomorphisms of $V_\R$ or $W_\R$ is $\H$.  
The defining representations of $\gg_\C$ on $V$ and $W$ restrict to 
irreducible representations of $\gg_\R$, and $V_\R = \H^\infty = \C^\infty + \C^\infty j  = \C^{2\infty} = V$.  The pairing of $V$ and $W$ is a nondegenerate $\gg_\R$--invariant $\R$--bilinear pairing of $V_\R$ and $W_\R$.  

The defining representation space $V_\R = \H^{*,\infty}$ of $\gsp(*,\infty)$ 
consists of the finitary quaternionic vectors.  The algebra of $\gsp(*,\infty)$--endomorphisms of $V_\R$ is $\H$.  The form on $V_\R$ is a nondegenerate $\gsp(*,\infty)$--invariant quaternionic--hermitian form of signature $(*,\infty)$.  In this case $V_\R = \H^{*,\infty} = \C^{2*,2\infty} = V$.

The defining representation space $V_\R = \H^\infty$ of $\gso^*(2\infty)$ 
consists of the finitary quaternionic vectors.  The algebra of 
$\gso^*(2\infty)$--endomorphisms of $V_\R$ is $\H$.  The form on $V_\R$ is the nondegenerate $\gso^*(2\infty)$--invariant quaternionic--skew--hermitian form $\kappa$ which is the limit of the forms $\kappa_n$.   In this case again $V_\R =  \H^\infty = \C^{2\infty} = V$. 

The Lie ind--group (direct limit group) corresponding to $\ggl(\infty;\C)$ is the general linear group $GL(\infty;\C)$, which consists of all invertible linear transformations of $V$ of the form $g = g' + \mathrm{Id}$ where $g' \in \ggl(\infty;\C)$.  The subgroup of $GL(\infty;\C)$ corresponding to $\gsl(\infty;\C)$ is the special linear group $SL(\infty;\C)$, consisting of elements of determinant $1$.  The connected ind--subgroups of $GL(\infty;\C)$ whose Lie algebras are $\gso(\infty;\C)$ and $\gsp(\infty;\C)$ are denoted by $SO_0(\infty;\C)$ and $Sp(\infty;\C)$.

In Section \ref{sec2} we recall the structure of parabolic subalgebras of complex finitary Lie algebras from \cite{DC2}.  A \emph{parabolic subalgebra} of a complex Lie algebra is by definition a subalgebra that contains a maximal locally solvable (that is, \emph{Borel}) subalgebra.  
Parabolic subalgebras of complex finitary Lie algebras are classified in \cite{DC2}.  
We recall the structural result that every parabolic subalgebra is a subalgebra (technically: defined by infinite trace conditions) of the stabilizer of a 
\emph{taut couple} of generalized flags in the defining representations, and we strengthen this result by studying the non--uniqueness of the flags in the case of the orthogonal Lie algebra.  As in the finite--dimensional case, we define a \emph{parabolic subalgebra} of a real locally reductive Lie algebra $\gg_\R$ as a subalgebra $\gp_\R$ whose complexification $\gp_\C$ is parabolic in $\gg_\C = \gg_\R \otimes_\R \C$.  It is a well--known fact that already in the finite--dimensional case a parabolic subalgebra of $\gg_\R$ does not neccesarily contain a subalgebra whose complexification is a Borel subalgebra of $\gg_\C$.  

In Section \ref{sec3} we prove our main result.  It extends
the classification in \cite{DC2} to the real case.  The key difference from the complex case is that one must take into account the additional structure of a defining representation space of $\gg_\R$ as a module over its algebra of $\gg_\R$--endomorphisms.

In Section \ref{sec5} we give a geometric
criterion for a parabolic subalgebra of $\gg_\C$ to be the complexification
of a parabolic subalgebra of $\gg_\R$.  The criterion is based on an
observation of one of us from the 1960's, concerning the structure of
closed real group orbits on finite--dimensional complex flag manifolds.
We recall that result, appropriately reformulated, and indicate its 
extension to flag ind--manifolds.

\section{Complex Parabolic Subalgebras}\label{sec2}
\setcounter{equation}{0}

\subsection{Generalized Flags}

Let $V$ and $W$ be countable--dimensional right vector spaces over a real division algebra 
$\D$ = $\R$, $\C$ or $\H$, together with
a nondegenerate bilinear pairing
$\langle \cdot , \cdot \rangle : V \times W \rightarrow \D$.  Then $V$ and $W$ are endowed with the Mackey topology, and the closure of a subspace $F \subset V$ is $F^{\perp \perp}$, where $\perp$
refers to the pairing $\langle \cdot , \cdot \rangle$.  
A set of $\D$--subspaces of $V$ (or $W$) is called a \emph{chain} in $V$ (or $W$) if it is 
totally ordered by inclusion.  A \emph{$\D$--generalized flag} is a chain in 
$V$ (or $W$) such that each subspace has an immediate predecessor or an immediate 
successor in the inclusion ordering, and every nonzero vector of $V$ (or $W$) is 
caught between an immediate predecessor--successor pair \cite{DP1}.

\begin{definition} {\rm \cite{DC2}}
A $\D$--generalized flag $\cF$ in $V$ (or $W$) is said to be \emph{semiclosed}
if for every immediate predecessor--successor pair $F' \subset F''$ the closure
of $F'$ is either $F'$ or $F''$. \hfill $\diamondsuit$
\end{definition}

If $\cC$ is a chain in $V$ (or $W$), then we denote by $\cC^\perp$ the chain in $W$ (or $V$)
consisting of the perpendicular complements of the subspaces of $\cC$.

We fix an identification of $V$ and $W$ with the defining representations of $\ggl(\infty;\D)$ as follows.  
To identify $V$ and $W$ with the defining representations of $\ggl(\infty;\D)$, it suffices to find bases in $V$ and $W$ dual with respect to the pairing $\langle \cdot, \cdot \rangle$.  If $\D \neq \H$, the existence of dual bases in $V$ and $W$ with respect to any nondegenerate $\D$--bilinear pairing is a result of Mackey \cite[p. 171]{M}.  Now suppose that $\D = \H$.  Then there exist $\C$--subspaces $V_\C \subset V$ and $W_\C \subset W$ such that $V = V_\C \oplus V_\C j$ and $W = W_\C \oplus W_\C j$.  The restriction of $\langle \cdot , \cdot \rangle$ to $V_\C \times W_\C$ is a nondegenerate $\C$--bilinear pairing.  The result of Mackey 
therefore implies the existence of dual bases in $V_\C$ and $W_\C$, which are also dual bases of $V$ and $W$ over $\H$.  In all cases we identify the right
multiplication of vectors in $V$ by elements of $\D$ with the action of the
algebra of $\gg_\R$--endomorphisms of $V_\R$.

\begin{definition}\label{def-selftaut} {\rm \cite{DC2}}
Let $\cF$ and $\cG$ be $\D$--semiclosed generalized flags in $V$ and $W$,
respectively.  We say $\cF$ and $\cG$ form a \emph{taut couple}
if $\cF^\perp$ is stable under the $\ggl(\infty;\D)$--stabilizer of $\cG$
and $\cG^\perp$ is stable under the $\ggl(\infty;\D)$--stabilizer of $\cF$.
If we have a fixed isomorphism 
 $f : V \rightarrow W$ then we say that $\cF$ is
self--taut if $\cF$ and $f(\cF)$ form a taut couple. 
\hfill $\diamondsuit$
\end{definition}

If one has a fixed isomorphism between $V$ and $W$, then there is an induced bilinear form on $V$.
A semiclosed generalized flag $\cF$ in $V$ is self--taut if and only if $\cF^\perp$ is stable under the $\ggl(\infty;\D)$--stabilizer of $\cF$, where $\cF^\perp$ is taken with respect to the form on $V$.

\begin{remark} \label{selftaut_is_biiso}  {\rm Fix a nondegenerate bilinear form on $V$.  If $V$ is finite dimensional, a self--taut generalized flag in $V$ consists of a finite number of isotropic subspaces together with their perpendicular complements.  In this case, the stabilizer of a self--taut generalized flag equals the stabilizer of its isotropic subspaces.  If $V$ is infinite dimensional, the non--closed non--isotropic subspaces in a self--taut generalized flag in $V$ influence its stabilizer, but it is still true that every subspace is either isotropic or coisotropic. 
Indeed, let $\cF$ be a self--taut generalized flag, and let $F \in \cF$. By \cite[Proposition 3.2]{DC2}, $F^\perp$ is a union
of elements of $\cF$ if it is a nontrivial proper subspace of $V$.  Hence $\cF \cup \{F^\perp\}$ is a chain that
contains both $F$ and $F^\perp$. Thus either $F \subset F^\perp$ or $F^\perp \subset F$, so $F$ is either isotropic or coisotropic.}
\hfill $\diamondsuit$
\end{remark}

We will need the following lemma when we pass to consideration of real parabolic
subalgebras.

\begin{lemma}\label{F-independent}
Suppose that $\D = \H$.  Fix $\H$--generalized flags $\cF$ in $V$ and $\cG$ in $W$.  Then $\cF$ and $\cG$ form a taut couple if and only if they are form taut couple as $\C$--generalized flags.
\end{lemma}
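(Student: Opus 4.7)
I would reduce both tautness conditions to the same combinatorial statement about perpendicular complements, exploiting the rigidity of $\H$-stability. Since every $\H$-subspace is a $\C$-subspace, and since the generalized flag axioms are purely order-theoretic, $\cF$ and $\cG$ are automatically $\C$-generalized flags when $V$ and $W$ are viewed as $\C$-vector spaces. What must then be shown is that the tautness condition depends only on these chains and their perpendicular complements, not on the choice of scalars $\H$ versus $\C$.

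The first step is to identify $\H$- and $\C$-perpendiculars. I would equip $V \times W$ with the $\C$-bilinear pairing $\langle \cdot, \cdot\rangle_\C := \pi_\C \circ \langle \cdot, \cdot\rangle_\H$, where $\pi_\C : \H = \C \oplus \C j \to \C$ is the projection onto the $\C$-summand. For any $\H$-stable subspace $F \subseteq V$, I claim $F^{\perp_\H} = F^{\perp_\C}$. The inclusion ``$\subseteq$'' is immediate. Conversely, if $w \in F^{\perp_\C}$, then $\pi_\C \langle v, w\rangle_\H = 0$ for all $v \in F$, and since $Fj = F$, the same vanishing holds with $v$ replaced by $vj$. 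The right-$\H$-multiplicativity of the pairing gives $\langle vj, w\rangle_\H = \pm j \langle v, w\rangle_\H$, and left multiplication by $j$ interchanges the $\C$- and $\C j$-summands of $\H$ (up to conjugation), so the two vanishings together force $\langle v, w\rangle_\H = 0$, i.e.\ $w \in F^{\perp_\H}$. The symmetric argument in $W$ gives $\cF^{\perp_\H} = \cF^{\perp_\C}$ and $\cG^{\perp_\H} = \cG^{\perp_\C}$ as chains.

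The implication ``$\C$-taut $\Rightarrow$ $\H$-taut'' is then immediate from the inclusion of the $\ggl(\infty;\H)$-stabilizer of $\cG$ in the $\ggl(\infty;\C)$-stabilizer of $\cG$, combined with the perpendicular identification. The reverse direction is the main obstacle, because a generic element of the $\ggl(\infty;\C)$-stabilizer of $\cG$ need not split into components in the $\ggl(\infty;\H)$-stabilizer under the real-form decomposition $\ggl(\infty;\C) = \ggl(\infty;\H) \oplus i\,\ggl(\infty;\H)$: explicit examples show that both ``$\H$-halves'' of such a $g$ can fail to preserve individual $G \in \cG$. I would circumvent this by appealing to \cite[Proposition 3.2]{DC2} (and its direct extension to taut couples), which characterizes tautness intrinsically: $(\cF,\cG)$ is taut iff every nontrivial proper $F^\perp$ with $F \in \cF$ is already an element of $\cG$, and symmetrically for $\cG^\perp$. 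This characterization depends only on the chains $\cF, \cG$ and their perpendiculars, all of which agree in the $\H$- and $\C$-settings by the identification above; hence $\H$-tautness and $\C$-tautness are equivalent.
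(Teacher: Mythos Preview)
Your approach is essentially the paper's: both hinge on \cite[Proposition~3.2]{DC2}, which (together with its trivial converse) characterizes tautness by the condition that each nontrivial proper $F^\perp$ is a \emph{union} of elements of $\cG$---not necessarily a single element, as you write, though the slip is harmless since any such union is automatically fixed by the stabilizer of $\cG$. Your explicit verification that $F^{\perp_\H}=F^{\perp_\C}$ for $\H$-stable $F$, and your use of the stabilizer inclusion $\ggl(\infty;\H)\subset\ggl(\infty;\C)$ for the easy direction, are welcome clarifications of steps the paper leaves implicit.
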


\begin{proof}
It is immediate from the definition that $\cF$ and $\cG$ are semiclosed $\C$--generalized flags if and only if they are semiclosed $\H$--generalized flags.  The proof of \cite[Proposition 3.2]{DC2} holds in the quaternionic
case as well.  Thus if $\cF$ and $\cG$ form a taut couple as either $\C$--generalized flags or $\H$--generalized flags, 
then as long as $F^\perp$ is a nontrivial proper subspace of $W$, it is a union of
elements of $\cG$ for any $F \in \cF$.
Thus $F^\perp$ is stable under both the $\ggl(\infty;\C)$--stabilizer and the $\ggl(\infty;\H)$--stabilizer
of $\cG$ for any $F \in \cF$.  Similarly, if $G \in \cG$ then $G^\perp$ is stable under
both the $\ggl(\infty;\C)$--stabilizer and the $\ggl(\infty;\H)$--stabilizer of $\cF$.  
\end{proof}

\subsection{Trace Conditions}

Let $\gg$ be a locally finite Lie algebra over a field of characteristic
zero.  A subalgebra
of $\gg$ is {\em locally solvable} (resp. {\em locally nilpotent})
if every finite subset of $\gg$ is contained in a solvable (resp. nilpotent)
subalgebra.  The sum of all locally solvable ideals is again a locally
solvable ideal, the {\em locally solvable radical} of $\gg$.  If $\gr$ is the
locally solvable radical of $\gg$ then 
$\gr \cap [\gg,\gg]$ is a locally nilpotent ideal in $\gg$.
Indeed, note that
$\gr \cap [\gg,\gg] = \bigcup_n (\gr \cap [\gg,\gg])\cap \gg_n$
for any exhaustion $\gg = \bigcup_n \gg_n$ by finite--dimensional subalgebras $\gg_n$,
and furthermore $(\gr \cap [\gg,\gg])\cap \gg_n$ is nilpotent for all $n$ by standard
finite--dimensional Lie theory.

Let $\gg$ be a splittable subalgebra of $\ggl(\infty;\D)$, that is, a 
subalgebra containing the Jordan components of its elements), and let 
$\gr$ be 
its locally solvable radical.  The {\em linear nilradical} $\gm$ of $\gg$
is defined to be the set of all nilpotent elements in $\gr$.

\begin{lemma}\label{lin-nilrad}
Let $\gg$ be a splittable subalgebra of $\ggl(\infty;\D)$.  Then its
linear nilradical $\gm$ is a locally nilpotent ideal.  If $\D = \R$, then
 the complexification $\gm_\C$ is the linear nilradical
of $\gg_\C$.
\end{lemma}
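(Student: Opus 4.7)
The lemma has three parts: that $\gm$ is an ideal of $\gg$, that $\gm$ is locally nilpotent, and that in the case $\D = \R$ one has $\gm_\C = \gm'$, where $\gm'$ denotes the linear nilradical of $\gg_\C$. The unifying tool is an exhaustion $\gg = \bigcup_n \gh_n$ by finite--dimensional splittable subalgebras; such an exhaustion exists because the splittable hull of any finite subset of the splittable algebra $\gg$ stays inside $\gg$ and is still finite--dimensional.

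For the first two assertions, fix $x_1, x_2 \in \gm$ and $y \in \gg$, and choose $\gh_n$ containing $\{x_1, x_2, y\}$. Then $\gr \cap \gh_n$ is a finite--dimensional locally solvable, hence solvable, ideal of $\gh_n$ and therefore lies inside the radical of $\gh_n$. By standard splittable Lie theory in finite dimensions, the nilpotent elements of the radical of $\gh_n$ form the nilradical of $\gh_n$, which is a nilpotent ideal. Since $x_1, x_2$ are nilpotent elements of $\gr \cap \gh_n$, they lie in this nilradical, hence so do $x_1 + x_2$ and $[x_i, y]$; together with the fact that $\gr$ is an ideal of $\gg$, this shows that $\gm$ is a subspace and an ideal of $\gg$. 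Local nilpotency is then immediate: any finite subset of $\gm$ lies in the nilradical of some $\gh_n$, which is a nilpotent Lie algebra, so the subalgebra it generates is nilpotent.

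For the complex statement, I would first identify $\gr_\C$ with the locally solvable radical $\gr(\gg_\C)$ of $\gg_\C$. The inclusion $\gr_\C \subseteq \gr(\gg_\C)$ is clear. For the reverse, complex conjugation $\sigma$ is a Lie algebra automorphism of $\gg_\C$ that preserves $\gg$ and hence preserves $\gr(\gg_\C)$; writing any $z \in \gr(\gg_\C)$ as $z = x + iy$ with $x = (z + \sigma z)/2$ and $iy = (z - \sigma z)/2$, both $x$ and $y$ lie in $\gr(\gg_\C) \cap \gg$, which is locally solvable and therefore contained in $\gr$. Next, because an element $\xi \in \gg$ is nilpotent as a matrix over $\R$ if and only if it is nilpotent as a matrix over $\C$, and $\gm$ is a locally nilpotent ideal consisting of nilpotent matrices, Engel's theorem applied to a finite--dimensional invariant subspace shows that $\gm_\C$ also consists of nilpotent matrices in $\ggl(\infty;\C)$; combined with $\gm_\C \subseteq \gr_\C$, this gives $\gm_\C \subseteq \gm'$. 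Conversely, $\gm'$ is $\sigma$--stable because $\sigma$ preserves both $\gr(\gg_\C)$ and the property of being a nilpotent matrix. Hence for $z = x + iy \in \gm'$ both $x = (z + \sigma z)/2$ and $iy = (z - \sigma z)/2$ lie in $\gm'$, so $x$ and $iy$ are nilpotent matrices in $\gr_\C \cap \gg = \gr$; thus $x, y \in \gm$ and $z \in \gm_\C$.

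The main obstacle is this last step: a nilpotent complex matrix need not have nilpotent real or imaginary parts, so one cannot hope to split an arbitrary $z \in \gm'$ into real and imaginary pieces by a direct matrix calculation. The resolution is to perform the splitting inside the ideal $\gm'$ itself, exploiting $\sigma$--stability of $\gm'$ — a property that becomes available only after one knows (from the first part of the lemma, applied to $\gg_\C$) that $\gm'$ is an ideal of $\gg_\C$ and that $\sigma$ preserves both the locally solvable radical and nilpotency of matrices.
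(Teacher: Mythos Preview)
Your argument is correct in substance, and for the first assertion it takes a more self--contained route than the paper. One terminological slip: the set of nilpotent elements of the solvable radical of a finite--dimensional linear Lie algebra need not coincide with the abstract nilradical (take $\gh_n = \C\cdot I_N$, which is its own nilradical but has no nonzero nilpotent elements). What you actually use, and what holds even without splittability of $\gh_n$, is that the nilpotent elements of $\gr(\gh_n)$ form a nilpotent ideal of $\gh_n$: they are a subspace by Lie's theorem after complexifying, and since $[\gh_n,\gr(\gh_n)]$ already consists of nilpotents the set is stable under bracket with $\gh_n$. So the exhaustion by \emph{splittable} $\gh_n$ is harmless but unnecessary.

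The paper establishes the ideal property differently: it quotes from \cite{DC2} the identity $\gm\cap[\gg,\gg]=\gr\cap[\gg,\gg]$ and then deduces $[\gm,\gg]\subset[\gr,\gg]\subset\gr\cap[\gg,\gg]\subset\gm$. Your finite--dimensional reduction avoids that citation at the cost of not isolating this identity, which is of independent interest. For the complexification statement the paper only asserts that $\gr_\C$ is the locally solvable radical of $\gg_\C$ and then invokes ``finite--dimensional theory''; your $\sigma$--stability argument spells out precisely what that invocation means, and your closing remark is on point: one needs the first part of the lemma, applied to $\gg_\C$, to know that $\gm'$ is a subspace before forming $(z\pm\sigma z)/2$ inside it.
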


\begin{proof}
If $\xi, \eta \in \gm$ they are both contained in the solvable radical
of a finite--dimensional subalgebra of $\gg$, so $\xi + \eta$ and
$[\xi,\eta]$ are nilpotent.  Thus, by Engel's Theorem, $\gm$ is a locally nilpotent subalgebra
of $\gg$.  Although
it is only stated for complex Lie algebras, \cite[Proposition 2.1]{DC2}
shows that $\gm \cap [\gg,\gg] = \gr \cap [\gg,\gg]$, so
$[\gm,\gg]\subset [\gr,\gg] \subset \gr \cap [\gg,\gg]$, and thus $\gm$ is an
ideal in $\gg$.  This proves the first statement.  For the second let $\gr$ be the
locally solvable radical of $\gg$ and note that $\gr_\C$ is the locally
solvable radical of $\gg_\C$, so the assertion follows from finite--dimensional theory. 
\end{proof}

\begin{definition} \label{def-trace-cond}
Let $\gg$ be a splittable subalgebra of $\ggl(\infty;\F)$ where $\F$ is $\R$ or $\C$, and
and let $\gm$ be its linear nilradical.
A subalgebra $\gp$ of $\gg$ is \emph{defined by trace conditions on $\gg$} if 
$\gm \subset \gp$  and  
$$[\gg,\gg]/\gm \subset \gp/\gm \subset \gg/\gm,$$
in other words if there is a family $\Tr$ of Lie algebra homomorphisms
$f: \gg \to \F$ 
with joint kernel equal to $\gp$.  Further, $\gp$ is {\em defined by infinite trace conditions}
if every $f \in \Tr$ annihilates every finite--dimensional simple ideal
in $[\gg,\gg]/\gm$. \hfill $\diamondsuit$
\end{definition}

We write $\Tr^\gp$ for the maximal family $\Tr$ of Definition 
\ref{def-trace-cond}.  On the group level we have corresponding
{\em determinant conditions} and {\em infinite determinant conditions}.
Note that infinite trace conditions and infinite determinant conditions
do not occur when $\gg$ and $G$ are finite dimensional. 

\subsection{Complex Parabolic Subalgebras} \label{sec2C}

Recall that a \emph{parabolic subalgebra} of a complex Lie algebra is by definition a subalgebra that contains a Borel subalgebra, i.e. a maximal locally solvable subalgebra.

\begin{theorem} {\rm \cite{DC2}} \label{complex-parab}
Let $\gg_\C$ be $\ggl(\infty,\C)$ or $\gsl(\infty,\C)$, and let
$V$ and $W$ be its defining representation spaces.
A subalgebra of $\gg_\C$ (resp. subgroup of $G_\C$) is 
parabolic if and only if it is defined by infinite trace conditions 
(resp. infinite determinant conditions) on the 
$\gg_\C$--stabilizer (resp. $G_\C$--stabilizer) of 
a (necessarily unique) taut couple of $\C$--generalized flags $\cF$ 
in $V$ and $\cG$ in $W$.

Let $\gg_\C$ be $\gso(\infty,\C)$ or $\gsp(\infty,\C)$. and let
$V$ be its defining representation space.
A subalgebra of $\gg_\C$ (resp. subgroup of $G_\C$) is 
parabolic if and only if it is defined by infinite trace conditions 
(resp. infinite determinant conditions) on the 
$\gg_\C$--stabilizer (resp. $G_\C$--stabilizer) of 
a self--taut $\C$--generalized flag $\cF$ in $V$.  In the $\gsp(\infty,\C)$ case
the flag $\cF$ is necessarily unique.
\end{theorem}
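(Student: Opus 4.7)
The plan is to prove each direction separately, handling the $\ggl/\gsl$ case and the $\gso/\gsp$ case in parallel since the self--taut condition plays the role of the taut couple condition once one fixes the bilinear form on $V$.

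For the ``if'' direction, I would first show that the $\gg_\C$--stabilizer $\gs$ of a taut couple $(\cF,\cG)$ (respectively, of a self--taut $\cF$) is itself parabolic, by refining $\cF$ and $\cG$ to maximal $\C$--generalized flags which still form a taut couple (respectively, refining $\cF$ to a maximal self--taut flag) and exhibiting the stabilizer of this refinement as a Borel subalgebra contained in $\gs$. The refinement step uses the semiclosedness condition and the structure of chains from \cite{DP1}, \cite{DC2}: one inserts intermediate subspaces at each predecessor--successor pair, taking closures as needed to preserve the taut couple / self--taut property. Once the Borel is produced inside $\gs$, any subalgebra of $\gs$ defined by infinite trace conditions still contains $[\gs,\gs]$ and in particular still contains the Borel modulo the linear nilradical $\gm$, and is therefore parabolic as well.

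For the ``only if'' direction, start with a parabolic subalgebra $\gp$ and choose a Borel $\gb \subset \gp$. The key ingredient is the classification of Borel subalgebras as stabilizers (modulo infinite trace conditions) of maximal semiclosed generalized flags, taut or self--taut according to the case. Given such a maximal flag $\cE$ associated to $\gb$, define the coarsening
$$\cF := \{F \in \cE \mid F \text{ is stable under } \gp\},$$
and analogously $\cG$ in the linear case. Verify that $(\cF,\cG)$ is a taut couple (respectively that $\cF$ is self--taut), using that the $\gp$--stable members of $\cE$ remain a semiclosed generalized flag and that tautness/self--tautness is inherited from $\cE$. Then check that the stabilizer of $(\cF,\cG)$ equals $\gp$ modulo infinite trace conditions: the inclusion $\gp \subset \mathrm{Stab}(\cF,\cG)$ is immediate from the construction, and for the other direction one uses that $\gp/\gm$ contains $[\gs,\gs]/\gm$ where $\gs = \mathrm{Stab}(\cF,\cG)$, because of the maximality built into the choice of $\cF$ and $\cG$ relative to $\gb$. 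Uniqueness of the flags in the $\ggl/\gsl$ and $\gsp$ cases follows because the stabilizer functor is injective on taut couples (respectively self--taut flags) in these cases, as the form or pairing rigidifies the choice; in the $\gso$ case one has known exceptions where two distinct maximal isotropic completions yield the same stabilizer, which is precisely the non--uniqueness the paper goes on to analyze.

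The main obstacle is the classification of Borel subalgebras referenced above: in the infinite--dimensional setting, maximal locally solvable subalgebras need not arise from obvious flag stabilizers because chains of subspaces in $V$ can be order--isomorphic to dense linear orders such as $\Q$, and because closure and semiclosedness interact nontrivially with the Jordan decomposition of elements. Once this structural input is in hand, together with the fact (\cite[Proposition 2.1]{DC2}) that $\gm \cap [\gg,\gg] = \gr \cap [\gg,\gg]$, the rest of the argument is essentially bookkeeping. A secondary technical point is that in the $\gso/\gsp$ cases one must verify that Lemma~\ref{F-independent}--type independence of the underlying division algebra (here trivial since $\D = \C$) is compatible with the passage to self--tautness, so that the refinement argument in direction ``if'' actually produces a flag whose stabilizer is solvable and not merely reductive modulo $\gm$.
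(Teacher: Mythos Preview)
The paper does not prove this theorem at all: it is quoted verbatim from \cite{DC2} and carries that citation in its heading. There is therefore nothing in the present paper to compare your proposal against. What the paper \emph{does} prove is the companion uniqueness statement for $\gso(\infty,\C)$ (Theorem~\ref{soc-flag-anomaly}), which supplements the cited result, and then the real--form analogue (Theorem~\ref{real-parab}), which takes Theorem~\ref{complex-parab} as a black box.

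As a sketch of how the result in \cite{DC2} is actually proved, your outline is broadly on the right track---refine to a maximal flag to produce a Borel for the ``if'' direction, and coarsen a maximal flag associated to a Borel inside $\gp$ for the ``only if'' direction---but two steps are not as routine as you suggest. First, in the ``if'' direction, it is not true that a subalgebra cut out by infinite trace conditions automatically contains the Borel you built: a Borel subalgebra of the stabilizer $\gs$ need not lie in the kernel of an arbitrary trace functional on $\gs$. One has to argue instead that the intersection of the Borel with $\gp$ can be enlarged, using $\gm$, to a maximal locally solvable subalgebra of $\gp$, and then check that this is still Borel in $\gg_\C$. Second, in the ``only if'' direction, the set of $\gp$--stable members of a maximal semiclosed generalized flag $\cE$ is not obviously itself a semiclosed generalized flag: one must show that every vector is still caught between an immediate predecessor--successor pair after deleting the non--$\gp$--stable subspaces, and that semiclosedness survives. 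Both points are handled in \cite{DC2}, but neither is mere bookkeeping.
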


In contrast to the finite dimensional case, the normalizer
of a parabolic subalgebra can be larger than the 
parabolic algebra.  
 For example, Theorem~\ref{complex-parab} implies that $\gsl(\infty,\C)$ is parabolic in $\ggl(\infty;\C)$, since it is the elements of the stabilizer of the trivial generalized flags  $\{ 0 , V \}$ and $\{ 0 , W \}$ whose usual trace is $0$.
To understand the origins of this example, one should consider the explicit construction in \cite{DP2} of a locally nilpotent Borel subalgebra of $\ggl(\infty;\C)$.
The normalizer of a parabolic subalgebra equals the stabilizer of the corresponding generalized flags \cite{DC2}, which is in general larger than the parabolic subalgebra because of the infinite determinant conditions.   The self--normalizing parabolics are thus those for which $\Tr^\gp = 0$.  
This is in contrast to the finite--dimensional setting, where there are no infinite trace conditions, and all parabolic subalgebras are self--normalizing.

In \cite{DC2} the uniqueness issue was discussed for $\ggl(\infty,\C)$, 
$\gsl(\infty,\C)$, and $\gsp(\infty,\C)$, but not for $\gso(\infty,\C)$.  
In the orthogonal setting one can have three different 
self--taut generalized flags with the same stabilizer 
(see \cite{DC1} and \cite{DP3}, where the non--uniqueness is discussed in 
special cases.)

\begin{theorem}\label{soc-flag-anomaly}
Let $\gp$ be a parabolic subalgebra given by infinite trace conditions on the  $\gso(\infty;\C)$--stabilizer  of a self--taut generalized flag $\cF$ in $V$.  Then there are two possibilities:
\begin{enumerate}
\item $\cF$ is uniquely determined by $\gp$;
\item there are exactly three self-taut generalized flags with the same stabilizer as $\cF$.
\end{enumerate}
The latter case occurs precisely when there exists an isotropic subspace $L \in \cF$ with  $\dim_\C L^\perp / L = 2$.  The three flags with the same stabilizer are then
\begin{itemize}
\item $\{F \in \cF \mid F \subset L \textrm{ or } L^\perp \subset F \}$
\item $\{F \in \cF \mid F \subset L \textrm{ or } L^\perp \subset F \} \cup M_1$
\item $\{F \in \cF \mid F \subset L \textrm{ or } L^\perp \subset F \} \cup M_2$
\end{itemize}
where $M_1$ and $M_2$ are the two maximal isotropic subspaces containing $L$.
\end{theorem}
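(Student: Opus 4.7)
My plan is to proceed in three stages: first construct the three candidate flags in the anomalous case and verify the generalized flag axioms, then show they share their $\gso(\infty,\C)$-stabilizer $\gs$, and finally argue uniqueness, namely that any self-taut flag with stabilizer $\gs$ is one of these three (or equals $\cF$ in the non-anomalous case).

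Assume there exists an isotropic $L \in \cF$ with $\dim_\C L^\perp/L = 2$. Then $N := L^\perp/L$ is a nondegenerate two-dimensional orthogonal space, so $\gso(N) \cong \gso(2,\C)$ is one-dimensional abelian and $N$ contains exactly two isotropic lines, whose preimages $M_1, M_2 \subset L^\perp$ are the two maximal isotropic subspaces of $V$ containing $L$. A direct calculation using $M_i^\perp \subseteq L^\perp$ together with the fact that $M_i/L$ is self-perpendicular in $N$ yields $M_i^\perp = M_i$ in $V$. Remark \ref{selftaut_is_biiso} applied inside $N$ then shows that any subspace of a self-taut flag lying strictly between $L$ and $L^\perp$ must equal $M_1$ or $M_2$, and the chain condition implies $\cF$ contains at most one such $M_i$. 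The three listed chains are therefore well-defined sub-chains of $\cF \cup \{M_1\} \cup \{M_2\}$, and they satisfy the semiclosed generalized flag and self-tautness axioms because $\cF$ does and because $M_i^\perp = M_i$.

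Next, the three flags share their $\gso$-stabilizer. Let $\gs$ denote the stabilizer of the reduced flag and $\gs_i$ the stabilizer of the flag with $M_i$ adjoined. Trivially $\gs_i \subseteq \gs$. Conversely, any $\xi \in \gs$ preserves $L$ and $L^\perp$, hence induces $\bar\xi \in \gso(N) \cong \gso(2,\C)$; since this one-dimensional abelian algebra preserves each isotropic line of $N$, $\xi$ preserves both $M_1$ and $M_2$. Thus $\gs \subseteq \gs_1 \cap \gs_2$, so $\gs = \gs_1 = \gs_2$, and the three flags define the same parabolic $\gp$ through the trace conditions $\Tr^\gp$.

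For uniqueness, let $\tilde\cF$ be any self-taut generalized flag with $\gso$-stabilizer $\gs$. Since elements of $\gso$ preserve $F$ iff they preserve $F^\perp$, the poset $P(\gs)$ of $\gs$-invariant subspaces of $V$ is closed under perp and contains $\tilde\cF \cup \tilde\cF^\perp$ as well as $\cF \cup \cF^\perp$. A finite-dimensional approximation applying the classification of parabolic subalgebras of $\gso(2n,\C)$ and $\gso(2n+1,\C)$ to $\gs \cap \gg_{n,\C}$ for an exhausting sequence $\gg_{n,\C} \uparrow \gso(\infty,\C)$, followed by a direct limit, shows that $P(\gs)$ equals $\cF \cup \cF^\perp$ augmented by $\{M_1, M_2\}$ precisely in the anomalous case, and $\cF \cup \cF^\perp$ otherwise. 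Then $\tilde\cF \subseteq P(\gs)$, and imposing the semiclosed flag axioms together with the stabilizer condition forces $\tilde\cF$ to be one of the three listed flags (anomalous case) or to equal $\cF$ (non-anomalous case). The main obstacle will be this $P(\gs)$ analysis: one has to verify case by case that for any isotropic $L' \in \cF$ with $\dim L'^\perp/L' \ne 2$, no new $\gs$-invariant isotropic subspace arises strictly between $L'$ and $L'^\perp$---either because the non-abelian $\gso(L'^\perp/L')$ for $\dim \ge 3$ fixes no additional isotropic subspace, or because no isotropic lines exist for $\dim \le 1$---while the unique anomalous $L$ (uniqueness following since nested $L_1 \subsetneq L_2$ with $\dim L_i^\perp/L_i = 2$ would force $L_2 = L_2^\perp$, contradicting $\dim L_2^\perp/L_2 = 2$) contributes exactly $M_1, M_2$ and nothing else.
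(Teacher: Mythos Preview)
Your first two stages---constructing the three flags in the anomalous case and showing via the abelianness of $\gso(2,\C)$ that they share the same stabilizer---are correct and essentially match what the paper needs implicitly.  The divergence, and the gap, is in your uniqueness argument.

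The paper does not attempt to describe the full poset $P(\gs)$ of $\gs$-invariant subspaces.  Instead it reconstructs $\cF$ directly from the normalizer $\widetilde\gp$ by computing the orbit $\widetilde\gp\cdot x$ for each $x\in V$, using the explicit decomposition $\widetilde\gp = \sum_{\alpha\in A\setminus C} F''_\alpha\wedge(F'_\alpha)^\perp + \sum_{\gamma\in C} F''_\gamma\wedge G''_\gamma + \Lambda^2(W)$ from \cite{DC2}.  One finds that $\widetilde\gp\cdot x$ is either $F'_\alpha$ or $F''_\alpha$ depending on whether $\alpha\in C$, and by taking closures and unions one recovers every isotropic $F''_\alpha$, every non-closed coisotropic $G''_\gamma$, and the middle subspace $W$---\emph{except} precisely when an isotropic $L$ with $\dim L^\perp/L=2$ appears, in which case the union of isotropic $\widetilde\gp\cdot x$ is $L^\perp$ rather than $M$, detecting the anomaly intrinsically.

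Your route via $P(\gs)$ and finite-dimensional approximation has two concrete problems.  First, $\gs\cap\gg_{n,\C}$ is in general not a parabolic subalgebra of $\gg_{n,\C}$, so the finite-dimensional classification of parabolics does not directly apply at each level; one would need a separate argument that the limit of invariant-subspace lattices is controlled.  Second, and more seriously, your case analysis at the end only treats isotropic $L'\in\cF$ with $\dim L'^\perp/L'$ finite.  A self-taut generalized flag can have $\dim L'^\perp/L' = \infty$ (this is where the subspace $W$ and the $\Lambda^2(W)$ term enter in the paper), and it can contain non-closed coisotropic subspaces $G''_\gamma$ with $\dim G''_\gamma/(F''_\gamma)^\perp = \infty$; for these the claim that $P(\gs)$ contains nothing beyond $\cF\cup\cF^\perp$ is neither argued nor obvious.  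The paper handles exactly these cases with separate $\widetilde\gp\cdot x$ computations (using $\Lambda^2(X)\cdot x = X$ when $\dim X\geqq 3$ for the $W$ step, and a pairing argument for $G''_\gamma$), and these are the steps your sketch is missing.
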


\begin{proof}
The main part of the proof is to show that $\gp$ determines all the subspaces in $\cF$, except a maximal isotropic subspace under the assumption that $\cF$ has a closed isotropic subspaces $L$ with $\dim_\C L^\perp / L = 2$.  

Let $A$ denote the set of immediate predecessor--successor pairs of $\cF$ such that both subspaces in the pair are isotropic.  Let $F'_\alpha$ denote the predecessor and $F''_\alpha$ the successor 
of each pair $\alpha \in A$.  Let $M$ denote the union of all the isotropic subspaces in $\cF$, i.e.\ $M = \bigcup_{\alpha \in A} F''_\alpha$.  If $M \neq M^\perp$, then $M$ has an immediate successor $W$ in $\cF$.  Note that $W$ is not isotropic, by the definition of $M$.  Furthermore, one has $W^\bot = M$ since $\cF$ is a self--taut generalized flag.  If $M = M^\perp$, let us take $W = 0$.

Let $C$ denote the set of all $\gamma \in A$ such that $F'_\gamma$ is closed.  For each $\gamma \in C$, it is seen in \cite{DC2} that the coisotropic subspace $(F''_\gamma)^\perp$ has an immediate successor in $\cF$.  For each $\gamma \in C$, let $G''_\gamma$ denote the immediate successor of $(F''_\gamma)^\perp$ in $\cF$.  It is also shown in \cite{DC2} that $(G''_\gamma)^\perp = F'_\gamma$.

Since $\cF$ is a self--taut generalized flag, $\cF$ is uniquely determined by the set of subspaces $$\{ F''_\alpha \mid \alpha \in A \} \cup \{ G''_\gamma \mid \gamma \in C \textrm{ such that }G''_\gamma \textrm{ is not closed} \} \cup \{ W \}.$$  We use separate arguments for these three kinds of subspaces to show that they are determined by $\gp$, except for a maximal isotropic subspace and $W$ under the assumption that $\cF$ has a closed isotropic subspace $L$ with $\dim_\C L^\perp / L = 2$.  We must also show that we can determine from $\gp$ whether or not $\cF$ has a closed isotropic subspace $L$ with $\dim_\C L^\perp / L = 2$. 

Let $\widetilde{\gp}$ denote the normalizer in $\gso(\infty;\C)$ of $\gp$.
We use the classical identification $\gso(\infty;\C) \cong \Lambda^2(V)$ where $u\wedge v$
corresponds to the linear transformation $x \mapsto 
\langle x,v\rangle u  - \langle x,u\rangle v$.  With this identification, following \cite{DC2} one has
$$
\widetilde{\gp} = \sum_{\alpha \in A \setminus C} 
	F''_\alpha \wedge (F'_\alpha)^\perp + 
	\sum_{\gamma \in C} F''_\gamma \wedge G''_\gamma + \Lambda^2 
	(W).
$$

Let $\alpha \in A$, and let $x \in F''_\alpha \setminus F'_\alpha$.  
Then one may compute
\begin{eqnarray*}
\widetilde{\gp}  \cdot x & = & \Bigl ( \sum_{\alpha \in A \setminus C} 
	F''_\alpha \wedge (F'_\alpha)^\perp + 
	\sum_{\gamma \in C} F''_\gamma \wedge G''_\gamma + \Lambda^2 
	(W) \Bigr ) \cdot x \\
& = &  \Bigl ( \sum_{\alpha \in A \setminus C} 
	F''_\alpha \otimes (F'_\alpha)^\perp + 
	\sum_{\gamma \in C} F''_\gamma \otimes G''_\gamma \Bigr )  \cdot x \\
& = & \Bigl ( \bigcup_{x \notin (F'_\alpha)^{\perp \perp}} F''_\alpha \Bigr ) \cup 
         \Bigl ( \bigcup_{x \notin (G''_\gamma)^\perp} F''_\gamma \Bigr ).
\end{eqnarray*}
As a result
$$
\widetilde{\gp}  \cdot x =
\begin{cases}
F'_\alpha & \textrm{if } \alpha \in A \setminus C \\
F''_\alpha & \textrm{if } \alpha \in C.
\end{cases}
$$

So far we have shown the following.
If $x \in \widetilde{\gp}  \cdot x$, then $F''_\alpha = \widetilde{\gp}  \cdot x$.  
If $x \notin \widetilde{\gp}  \cdot x$, then 
$F''_\alpha = ( \widetilde{\gp}  \cdot x)^{\perp \perp}$.  
Furthermore, if $x \notin M$, then $\widetilde{\gp}  \cdot x$ is not isotropic, unless 
there exists a closed isotropic subspace $L \in \cF$ with 
$\dim_\C L^\perp / L = 2$, and $x$ is an element of $M_1$ or $M_2$.  We now
consider the union of the subspaces  $\widetilde{\gp}  \cdot x$, where the union is taken
over $x \in V$ for which $\widetilde{\gp}  \cdot x$ is isotropic.  If there does not exist $L$
as described, then these subspaces will be the nested isotropic subspaces computed above, and
indeed their union is $M$.  If $L$ exists, then these subspaces will exhaust 
$L$, and furthermore $M_1$ and $M_2$ will both appear in the union.  Hence the 
union of the isotropic subspaces of the form $\widetilde{\gp}  \cdot x$ for $x \in V$ when 
$L$ exists is $L^\perp$.  As a result, if the union of all the isotropic 
subspaces of the form $\widetilde{\gp}  \cdot x$ for $x \in V$ is itself isotropic, then 
we conclude that no such $L$ exists and we have constructed the subspace $M$.  
If that union is not isotropic, then we conclude that there exists a closed 
isotropic subspace $L \in \cF$ with $\dim_\C L^\perp / L = 2$, and the union 
is $L^\perp$.  In the latter 
case, $L$ is recoverable from $\gp$, as it equals $L^{\perp \perp}$.
We have now shown that we can determine whether $\cF$ has a closed 
isotropic subspace $L$ with $\dim_\C L^\perp / L = 2$, 
that $F''_\alpha$ is determined by $\gp$ for all $\alpha \in A$ in the 
latter case, and that $F_\alpha''$ is determined by $\gp$ for all 
$\alpha \in A$ such that $F''_\alpha \subset L$ in the former case.

We now turn our attention to a non--closed subspace $G''_\gamma$ for $\gamma \in C$.  Since $G''_\gamma$ is not closed, the codimension of $F''_\gamma$ in $G''_\gamma$ is infinite.  Thus if there exists $L \in \cF$ as above, then $F''_\gamma \subset L$.  So we have already shown that $F''_\gamma$,  and indeed $F'_\gamma$ as well, are recoverable from $\gp$ whether or not there exists $L \in \cF$.  Let $x \in (F'_\gamma)^\perp \setminus (F''_\gamma)^\perp$.
Then there exists 
$v \in F''_\gamma$ such that $\langle v ,x \rangle \neq 0$, and one has
$$
(v \wedge G''_\gamma) \cdot x  =  \{ (v \wedge y) \cdot x \mid y \in G''_\gamma \} 
 =  \{ \langle x , y \rangle v - \langle x , v \rangle y \mid y \in G''_\gamma \}.
$$
Since $v \wedge G''_\gamma \subseteq \widetilde{\gp} $ and $v \in F''_\gamma$, we see that 
$G''_\gamma = (v \wedge G''_\gamma) \cdot x + F''_\gamma \subset 
\widetilde{\gp}  \cdot x + F''_\gamma \subset G''_\gamma$.  Hence 
$G''_\gamma =  \widetilde{\gp}  \cdot x + F''_\gamma$, and we conclude that $G''_\gamma$ is recoverable from $\gp$.

Finally, we must show that $\gp$ determines $W$ under the assumption that no subspace $L \in \cF$ as above exists.  We have already shown that $M$ is recoverable from $\gp$ under this assumption.
If $M = M^\bot$, then $W = 0$.   We claim that $W = \widetilde{\gp} \cdot x + M$ for any $x \in M^\perp \setminus M$ when $M \neq M^\bot$.
Indeed, let $X$ be any vector space complement of $M$ in $W$. 
Since $x \notin M$ and $W^\perp = M$, one has $\langle x , X \rangle \neq 0$.  
Furthermore, the restriction of the symmetric bilinear form on $V$ to $X$ is symmetric and nondegenerate. 
Then $\Lambda^2 (X) \cdot x = X$ because $\dim_\C X \geqq 3$.
Since $\Lambda^2(X) \subset \widetilde{\gp}$, we conclude that $\widetilde{\gp}  \cdot x + M = W$.  
Thus $W$ can be recovered from $\gp$.

If $\cF$ is a self--taut generalized flag without any isotropic subspace $L \in \cF$ such that $\dim_\C L^\perp / L = 2$, then we have now shown that $\cF$ is uniquely determined by $\gp$.  Finally, suppose that there does exist an isotropic subspace $L \in \cF$ such that $\dim_\C L^\perp / L = 2$.  Then we have shown that every subspace of $\cF$ which does not lie strictly between $L$ and $L^\perp$ is determined by $\gp$.  There are exactly two maximal isotropic subspaces $M_1$ and $M_2$ containing $L$, and both $M_1$ and $M_2$ are stable under the $\gso(\infty;\C)$--stabilizer of $L$.  Hence the three self-taut generalized flags listed in the statement are precisely the self--taut generalized flags whose stabilizers equal the stabilizer of $\cF$.
\end{proof}

\section{Real Parabolic Subalgebras}
\label{sec3}
\setcounter{equation}{0}

Recall that  a \emph{parabolic subalgebra} of a real Lie algebra $\gg_\R$
is a subalgebra whose complexification is a parabolic subalgebra of the
complexified algebra $\gg_\C$. 

Let $\gg_\C$ be one of
$\ggl(\infty,\C)$, $\gsl(\infty,\C)$, $\gso(\infty,\C)$, and $\gsp(\infty,\C)$,
and let $\gg_\R$ be a real form of $\gg_\C$.  Let $G_\R$ be the corresponding connected real subgroup of $G_\C$.
When $\gg_\R$ has two inequivalent defining representations, we denote them by $V_\R$ and $W_\R$, and when $\gg_\R$ has only one defining representation, we denote it by $V_\R$.  Let $\D$ denote the algebra of $\gg_\R$--endomorphisms of $V_\R$.

\begin{theorem} \label{real-parab}
Suppose that $\gg_\R$ has two inequivalent defining representations.
A subalgebra of $\gg_\R$ (resp. subgroup of $G_\R$) is 
parabolic if and only if it is defined by infinite trace conditions 
(resp. infinite determinant conditions) on the 
$\gg_\R$--stabilizer (resp. $G_\R$--stabilizer) of 
a taut couple of $\D$--generalized flags $\cF$ in $V_\R$ and $\cG$ in $W_\R$.

Suppose that $\gg_\R$ has only one defining representation.
A subalgebra of $\gg_\R$ (resp. subgroup) of $G_\R$ is 
parabolic if and only if it is defined by infinite trace conditions 
(resp. infinite determinant conditions) on the 
$\gg_\R$--stabilizer (resp. $G_\R$--stabilizer) of 
a self--taut $\D$--generalized flag $\cF$ in $V_\R$.
\end{theorem}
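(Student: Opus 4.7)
The plan is to reduce to Theorem \ref{complex-parab} via the antilinear involution $\sigma$ of $\gg_\C$ whose fixed-point set is $\gg_\R$, extended to the defining representation(s) of $\gg_\C$. First, if $\gp_\R \subset \gg_\R$ is parabolic, then $\gp_\C$ is parabolic in $\gg_\C$, so Theorem \ref{complex-parab} produces a (self-)taut couple of $\C$-generalized flags (or a self-taut $\C$-generalized flag) whose $\gg_\C$-stabilizer, cut down by infinite trace conditions, is $\gp_\C$. Since $\gp_\C$ is $\sigma$-stable, so is its normalizer in $\gg_\C$, which is the stabilizer of the flag(s); hence $\sigma$ permutes the (self-)taut flags with this common stabilizer. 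In the uniqueness cases ($\ggl$, $\gsl$, $\gsp$) of Theorem \ref{complex-parab} this at once forces $\sigma$-invariance of the flag(s). In the $\gso$ case Theorem \ref{soc-flag-anomaly} gives at most three candidate flags, and the ``core'' flag $\{F \in \cF \mid F \subset L \text{ or } L^\perp \subset F\}$ is always $\sigma$-invariant since it is uniquely characterized by $\gp_\C$, so it serves as a $\sigma$-stable representative.

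Next I will translate $\sigma$-stability of the $\C$-generalized flag(s) into the property of being $\D$-generalized flag(s) in $V_\R$ (and $W_\R$). This is a case analysis based on the action of $\sigma$ on the defining representations. In the totally real cases ($\gsl(\infty;\R)$, $\ggl(\infty;\R)$, $\gso(p,\infty)$, $\gso(\infty,\infty)$, $\gsp(\infty;\R)$), $V = V_\R \otimes_\R \C$ and $\sigma$ acts as ordinary complex conjugation, so $\sigma$-stable $\C$-subspaces are complexifications of $\R$-subspaces and $\D = \R$. In the quaternionic cases ($\gsl(\infty;\H)$, $\ggl(\infty;\H)$, $\gsp(*,\infty)$, $\gso^*(2\infty)$), $V = V_\R$ as sets and $\sigma$ acts on $V$ through the antilinear involution given by right multiplication by $j \in \H$, so $\sigma$-stable $\C$-subspaces are exactly $\H$-subspaces; Lemma \ref{F-independent} then ensures the taut couple condition is preserved. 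In the complex-Hermitian cases ($\gsu(*,\infty)$, $\gu(*,\infty)$), $V = V_\R$ and $W = W_\R$ as $\C$-vector spaces, while $\sigma$ identifies $V$ with $W$ antilinearly via the Hermitian pairing, so $\sigma$-invariance of the couple $(\cF, \cG)$ amounts to $\cG = \sigma(\cF)$ and each constituent is a $\C$-generalized flag, i.e., a $\D$-flag with $\D = \C$.

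Then the trace/determinant conditions descend: the maximal family $\Tr^{\gp_\C}$ of complex trace functionals defining $\gp_\C$ is $\sigma$-invariant as a set, and its $\sigma$-fixed points form the family $\Tr^{\gp_\R}$ of $\R$-valued Lie algebra homomorphisms whose joint kernel inside the $\gg_\R$-stabilizer of the $\D$-flag(s) is $\gp_\R$. A finite-dimensional simple ideal in $[\gg_\R, \gg_\R] / \gm_\R$ complexifies to either a simple ideal or a sum of two conjugate simple ideals in $[\gg_\C, \gg_\C] / \gm_\C$ by standard finite-dimensional theory, so the ``infinite'' qualifier on the trace conditions transfers correctly in either direction, and exponentiation gives the group-level statement. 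The converse runs the same argument in reverse: a $\D$-generalized flag (couple) in $V_\R$ (and $W_\R$) yields a $\sigma$-stable $\C$-generalized flag (couple) in $V$ (and $W$) by the case analysis above, Theorem \ref{complex-parab} produces a parabolic $\gp_\C$ from infinite trace conditions, and $\sigma$-stability of the flag(s) forces $\gp_\C$ to be the complexification of a parabolic subalgebra of $\gg_\R$.

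The main obstacle is a clean handling of the non-uniqueness in the orthogonal case from Theorem \ref{soc-flag-anomaly}. One must verify that the anomalous maximal isotropic subspaces $M_1$ and $M_2$ are either both $\sigma$-stable or swapped by $\sigma$ (a consequence of $\sigma^2 = 1$ acting on the unordered pair $\{M_1, M_2\}$ singled out by $\gp_\C$), so that the $\sigma$-stable core flag always serves as a valid representative. A secondary check is that the $\gg_\R$-stabilizer of a $\D$-flag in $V_\R$ (and $W_\R$) complexifies to the $\gg_\C$-stabilizer of the associated $\C$-flag; this is immediate in the totally real case but requires a small verification in the quaternionic and Hermitian settings, which can be carried out on the level of finite-dimensional approximations of the ind-algebras.
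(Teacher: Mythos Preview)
Your proposal is correct and follows essentially the same route as the paper: complexify, invoke Theorem \ref{complex-parab}, use $\sigma$-stability of $\gp_\C$ together with the uniqueness statements (and Theorem \ref{soc-flag-anomaly} for the orthogonal anomaly) to force $\sigma$-stability of the flag(s), then run the same case analysis (totally real, Hermitian, quaternionic) to convert $\sigma$-stable $\C$-flags into $\D$-flags, with Lemma \ref{F-independent} handling the taut-couple condition in the quaternionic cases. One small terminological slip: right multiplication by $j$ on $V$ has square $-\mathrm{Id}$, not $\mathrm{Id}$, so it is not literally an involution on vectors; but since $-\mathrm{Id}$ fixes every subspace, it does act as an involution on the set of subspaces of $V$, which is all your argument needs (and is exactly how the paper uses it).
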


\begin{proof}
We will prove the statements for the Lie algebras in question.  The statements on the level of Lie ind--groups follow immediately, since infinite determinant conditions on a Lie ind--group are equivalent to infinite trace conditions on its Lie algebra.  

Suppose that $\gp_\R$ is a parabolic subalgebra of $\gg_\R$.  By definition, the complexification $\gp_\C$ is a parabolic subalgebra of $\gg_\C$.  Theorem~\ref{complex-parab} implies that $\gp_\C$ is defined by infinite trace conditions $\Tr^{\gp_\C}$ on the $\gg_\C$--stabilizer of a taut couple of generalized flags in $V$ and $W$ or on a self--taut generalized flag in $V$.  As $\Tr^{\gp_\C}$
is stable under complex conjugation it is the complexification of the real subspace $(\Tr^{\gp_\C})_\R := \{t \in \Tr^{\gp_\C} \mid \tau(t) = t\}$ where
$\tau$ comes from complex conjugation of $\gg_\C$ over $\gg_\R$.  We will 
use this to show case by case that $\gp_\R$ is defined by trace conditions on the $\gg_\R$--stabilizer of the appropriate generalized flag(s).

The first cases we treat are those where the defining representation space
$V_\R$ is the fixed point set of a complex conjugation $\tau : V \to V$. 
The real forms fitting this description are
$\gsl(\infty;\R)$,
$\gso(\infty,\infty)$,
$\gso(p,\infty)$,
$\gsp(\infty;\R)$, and
$\ggl(\infty;\R)$.
Consider the $\gsl(\infty;\R)$ case, and note that the proof also holds in the $\ggl(\infty;\R)$ case. 
Let $\cF$ and $\cG$ be the taut couple of generalized flags in $V$ and $W$ given in Theorem~\ref{complex-parab}, and note that $W_\R$ is the fixed points of complex conjugation $\tau : W \to W$. Evidently $\tau(\gp_\C) = \gp_\C$, so $\tau(\cF) = \cF$ and $\tau(\cG) = \cG$ by the uniqueness claim of Theorem \ref{complex-parab}. 
Since the generalized flags $\cF$ and $\cG$ are $\tau$--stable, every subspace in them is $\tau$--stable.  (Explicitly, for any $F \in \cF$, we have $\tau (F) \in \cF$, so either $\tau (F) \subset F$ or $F \subset \tau (F)$.  Since $\tau^2 = \mathrm{Id}$, we have $F = \tau (F)$ for any $F \in \cF$.)  Hence every subspace in $\cF$ and $\cG$ has a real form, obtained as the intersection with $V_\R$ and $W_\R$, respectively.
The generalized flags $\cF_\R := \{ F \cap V_\R \mid F \in \cF \}$ and $\cG_\R := \{ G \cap W_\R \mid G \in \cG \}$ form a taut couple as $\R$--generalized flags in $V_\R$ and $W_\R$.  
Now $\gp_\R$ is defined by the infinite trace conditions $(\Tr^{\gp_\C})_\R$ on the $\gsl(\infty;\R)$--stabilizer of the taut couple $\cF_\R$ and $\cG_\R$ of generalized flags in $V_\R$ and $W_\R$.

If $\gg_\R$ is $\gso(*,\infty)$ or $\gsp(\infty;\R)$, Theorem~\ref{complex-parab} implies that $\gp_\C$ is defined by infinite trace conditions on the $\gg_\C$--stabilizer a self--taut generalized flag $\cF$ in $V$.
The arguments of the $\gsl(\infty;\R)$ case show that $\cF$ is $\tau$--stable, 
provided that $\tau (\gp_\C) = \gp_\C$ forces 
$\tau(\cF) = \cF$.  That is ensured by the uniqueness claim in Theorem~\ref{complex-parab} for the
symplectic case, and by Theorem \ref{soc-flag-anomaly} in the orthogonal cases where uniqueness holds.  Uniqueness fails precisely when $\gg_\R =
\gso(\infty,\infty)$ and there exists an isotropic subspace $L \in \cF$ with  $\dim_\C (L^\perp / L) = 2$.
We may assume that $\cF$ is the first of the three generalized flags listed in the statement of Theorem~\ref{soc-flag-anomaly}.  Then $\tau (\cF)$ is one of the three generalized flags listed in the statement of Theorem~\ref{soc-flag-anomaly}, and since $\cF$ is contained in any of those three, the subspaces of $\cF$ are all $\tau$--stable.  Finally, the generalized flag $\cF_\R := \{ F \cap V_\R \mid F \in \cF \}$ in $V_\R$ is self--taut, and $\gp_\R$ is defined by the infinite trace conditions $(\Tr^{\gp_\C})_\R$ on its $\gg_\R$--stabilizer.

Second, suppose that $\gg_\R = \gsu(*,\infty)$.  Note that the arguments for $\gsu(*,\infty)$ apply without change to $\gu(*,\infty)$.  
By Theorem~\ref{complex-parab}, $\gp_\C$ is
given by infinite trace conditions $\Tr^{\gp_\C}$ on the 
$\ggl(\infty;\C)$--stabilizer of a taut couple $\cF$ and $\cG$ of generalized flags in $V$ and $W$.  
There exists an isomorphism of $\gg_\R$--modules $f: V \to W$. 
Both $\cG$ and $f(\cF)$ are stabilized by $\gp_\R$, hence also by $\gp_\C$, so the uniqueness claim of Theorem~\ref{complex-parab} tells us that $\cG = f(\cF)$.
Thus $\cF$ is self--taut.
We conclude that $\gp_\R$ is given by the infinite trace conditions $(\Tr^{\gp_\C})_\R$ on the stabilizer of the self--taut generalized flag $\cF$.

The third case we consider is that of $\gg_\R = \gsl(\infty;\H)$. Note that the $\ggl(\infty;\H)$ case is proved in the same manner.
Then $\gg_\C = \gsl(2\infty;\C)$, where we have the identifications $V = \C^{2\infty} = \C^\infty + \C^\infty j = \H^\infty = V_\R$ and $W = W_\R$.
The quaternionic scalar multiplication $v \mapsto vj$ is a complex
conjugate--linear transformation $J$ of $\C^{2\infty}$ of square $-\mathrm{Id}$,
and the complex conjugation $\tau$ of $\gg_\C$ over $\gg_\R$ is given
by $\xi \mapsto J \xi J^{-1} = J^{-1} \xi J$.
Let $\cF$ and $\cG$ be the unique taut couple given by Theorem~\ref{complex-parab}.  
Since $\gp_\C = \tau(\gp_\C)$, we have $\cF = J(\cF)$ and $\cG = J(\cG)$.  Since $J^2 = -\mathrm{Id}$, every subspace of $\cF$ and $\cG$ is preserved by $J$.  In other words $\cF$ and $\cG$
consist of $\H$-subspaces of $V_\R$ and $W_\R$.  The fact that 
$\cF$ and $\cG$ form a taut couple of $\C$--generalized flags in 
$V$ and $W$ implies via Lemma~\ref{F-independent} that they form a 
taut couple  of $\H$--generalized flags in $V_\R$ and $W_\R$.
Hence $\gp_\R$ is defined by the infinite trace conditions $(\Tr^{\gp_\C})_\R$ on the stabilizer of the taut couple $\cF$, $\cG$. 

The fourth case we consider is that of $\gsp(*,\infty)$.
Then $V_\R$ has an invariant quaternion--hermitian form of signature $(*,\infty)$ and a complex conjugate--linear transformation $J$ of square $-\mathrm{Id}$ as described above.
Let $\cF$ be the unique self--taut generalized flag in $V$ given by Theorem~\ref{complex-parab}.
By the uniqueness of $\cF$, we have $\cF = J(\cF)$,
so as before $\cF$ consists of $\H$--subspaces of $V_\R$.  Lemma~\ref{F-independent} implies that $\cF$ is self--taut when considered as an $\H$--generalized flag in $V_\R$.  Hence $\gp_\R$ is defined by the infinite trace conditions $(\Tr^{\gp_\C})_\R$ on the stabilizer of $\cF$.

The fifth and final case and is that of $\gg_\R = \gso^*(2\infty)$.
Any subspace of $V$ which is stable under  the $\C$-conjugate linear 
map $J$ which corresponds to $x \mapsto xj$ is an $\H$--subspace of $V_\R$.
Let $\cF$ be a self--taut generalized flag in $V$ as given by 
Theorem~\ref{complex-parab}.  Since $\gg_\C = \gso(\infty;\C)$, 
Theorem \ref{soc-flag-anomaly} says that either $\cF$ is unique or there 
are exactly three possibilities for $\cF$.  When $\cF$ is unique, we 
must have $\cF = J(\cF)$, so $\cF$ is an $\H$--generalized flag.
When $\cF$ is not unique, we may assume that $\cF$ is the first of the 
three generalized flags listed in the statement of Theorem 
\ref{soc-flag-anomaly}, the one with an immediate predecessor--successor 
pair $L \subset L^\perp$ where $L$ is closed and $\dim_\C (L^\perp / L) = 2$.
Then $J(\cF)$ has the same property so $J(\cF) = \cF$.  In all cases 
Lemma~\ref{F-independent} implies that $\cF$ is self--taut when considered 
as an $\H$--generalized flag.  Hence $\gp_\R$ is defined by the infinite 
trace conditions $(\Tr^{\gp_\C})_\R$ on the $\gso^*(2\infty)$--stabilizer 
of the self--taut $\H$--generalized flag $\cF$.

Conversely, suppose that $\gp_\R$ is defined by infinite trace conditions $\Tr^{\gp_\R}$ on the $\gg_\R$--stabilizer of a taut couple $\cF_\R$, $\cG_\R$ or a self--taut generalized flag $\cF_\R$, as appropriate.  Either $V = V_\R \otimes \C$ or $V = V_\R$.

Suppose first that $V = V_\R \otimes \C$.  Let $\cF : = \{ F \otimes \C \mid F \in \cF_\R \}$.  If $\gg_\C$ has only one defining representation $V$, then $\cF$ is a self--taut generalized flag in $V$, and $\gp_\C$ is 
defined by the infinite trace conditions $\Tr^{\gp_\R} \otimes \C$ on the $\gg_\C$--stabilizer of $\cF$.  
Now suppose that $\gg_\C$ has two inequivalent defining representations.  If $\gg_\R$ also has two inequivalent defining representations, let $\cG := \{ G \otimes \C \mid G \in \cG_\R \}$.  If $\gg_\R$ has only one defining representation, then let $\cG$ be the image of $\cF$ under the $\gg_\R$--module isomorphism $V \to W$.  Then $\cF$, $\cG$ are a taut couple, and $\gp_\C$ is defined by the infinite trace conditions $\Tr^{\gp_\R} \otimes \C$ on the $\gg_\C$--stabilizer of $\cF$, $\cG$.

Suppose that $V = V_\R$.  Then $\gg_\R$ and $\gg_\C$ have the same number of defining representations.  If $\gg_\R$ has two defining representations, then Lemma~\ref{F-independent} implies that $\cF_\R$ and $\cG_\R$ are a taut couple when considered as $\C$--generalized flags.  Then $\gp_\C$ is defined by the infinite trace conditions $\Tr^{\gp_\R} \otimes \C$ on the $\gg_\C$--stabilizer of $\cF_\R$, $\cG_\R$.
If $\gg_\R$ has only one defining representation, then Lemma~\ref{F-independent} implies that $\cF_\R$ is a self--taut generalized flag when considered as a $\C$--generalized flag.  Thus $\gp_\C$ is 
defined by the infinite trace conditions $\Tr^{\gp_\R} \otimes \C$ on the $\gg_\C$--stabilizer of $\cF_\R$.

In each case, Theorem~\ref{complex-parab} implies that $\gp_\C$ is a parabolic subalgebra of $\gg_\C$, so by definition $\gp_\R$ is a parabolic subalgebra of $\gg_\R$.  
\end{proof}

\begin{theorem}
Let $\gp_\R$ be a parabolic subalgebra of $\gg_\R$.  If $\gg_\R \ncong \gso(\infty,\infty)$, then there is a unique taut couple or self--taut generalized flag associated to $\gp_\R$ by {\rm Theorem~\ref{real-parab}}.  The real analogue of {\rm Theorem~\ref{soc-flag-anomaly}} holds for $\gg_\R \cong \gso(\infty,\infty)$.
\end{theorem}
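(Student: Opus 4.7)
The plan is to reverse the construction of Theorem~\ref{real-parab}: a real flag $\cF_\R$ (or taut couple $\cF_\R$, $\cG_\R$) gives rise via that construction to a $\tau$--stable complex flag.  Two distinct real flags with the same $\gp_\R$ therefore complexify to two distinct $\tau$--stable complex flags with the same $\gp_\C$, so the uniqueness question reduces to counting how many of the complex flags associated to $\gp_\C$ by Theorem~\ref{complex-parab} or Theorem~\ref{soc-flag-anomaly} can be $\tau$--stable.

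Outside the orthogonal setting, Theorem~\ref{complex-parab} already pins down the complex taut couple or self--taut flag uniquely, and uniqueness of $\cF_\R$ follows at once.  The remaining work is thus confined to the three orthogonal real forms $\gso(p,\infty)$ with $p < \infty$, $\gso^*(2\infty)$, and $\gso(\infty,\infty)$, where Theorem~\ref{soc-flag-anomaly} allows up to three complex flags with the same stabilizer.

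For $\gso(p,\infty)$ with $p < \infty$, the trichotomy of Theorem~\ref{soc-flag-anomaly} requires a closed isotropic $L \in \cF$ with $\dim_\C L^\perp / L = 2$.  If such an $L$ comes from a real flag then $L = L_\R \otimes \C$ for some closed real isotropic $L_\R \subset V_\R$; the real Witt index of $V_\R$ being $p < \infty$ forces $L_\R$ to be finite--dimensional, so $L_\R^\perp$ has finite codimension in $V_\R$ and $\dim_\R L_\R^\perp / L_\R = \infty$, contradicting $\dim_\C L^\perp / L = 2$.  Hence the non--uniqueness never arises for flags coming from real flags, and $\cF_\R$ is unique.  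For $\gso^*(2\infty)$, an $\H$--generalized flag in $V_\R$ is precisely a $J$--stable $\C$--generalized flag in $V$; in the trichotomy of Theorem~\ref{soc-flag-anomaly} the extra subspaces $M_i$ satisfy $\dim_\C M_i / L = 1$, and a $J$--stable one--complex--dimensional space would realize $J|_M$ as $z \mapsto \lambda \bar z$ with $J^2 = -\mathrm{Id}$, forcing $|\lambda|^2 = -1$, which is impossible.  Hence neither $M_1$ nor $M_2$ can be $J$--stable (in fact $J$ swaps them), only the first of the three flags is $J$--stable, and $\cF_\R$ is again unique.

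For $\gg_\R = \gso(\infty,\infty)$ the real analogue of Theorem~\ref{soc-flag-anomaly} is proved by transcribing the proof of Theorem~\ref{soc-flag-anomaly} into the real category, using $\gso(\infty,\infty) \cong \Lambda^2(V_\R)$ and an analogous computation of $\widetilde{\gp}_\R \cdot x$ for $x \in V_\R$.  The essential new ingredient is that a two--dimensional nondegenerate real quadratic space has two real isotropic lines precisely in signature $(1,1)$; because both signs in the signature $(\infty,\infty)$ of $V_\R$ are infinite, closed real isotropic $L_\R$ with $\dim_\R L_\R^\perp / L_\R = 2$ of signature $(1,1)$ genuinely exist, and exactly for such $L_\R$ do three distinct real flags with the same $\gp_\R$ arise, in parallel with the complex trichotomy.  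I expect the hardest step to be the quaternionic bookkeeping in the $\gso^*(2\infty)$ case --- in particular confirming that the induced $J$ on $L^\perp / L$ swaps rather than fixes the two isotropic lines --- but once this parity obstruction is pinned down the rest of the proof is a routine case analysis.
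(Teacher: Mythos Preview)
Your overall strategy---reduce the real uniqueness question to the complex one via the constructions in the proof of Theorem~\ref{real-parab}, then treat the orthogonal trichotomy case by case---is exactly what the paper does.  For $\gso^*(2\infty)$ your argument that no $J$--stable complex line exists is the same as the paper's observation that $\dim_\H L^\perp/L = 1$ leaves no room for a proper $\H$--subspace strictly between $L$ and $L^\perp$; these are two phrasings of the same parity obstruction.

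Your treatment is more explicit than the paper's in two places.  First, the paper never singles out $\gso(p,\infty)$ with $p<\infty$: it tacitly folds this into the ``unique complex flag'' case, which is legitimate only after one checks (as you do) that the Witt--index bound forces $\dim_\C L^\perp/L = \infty$ and hence rules out the trichotomy for $\gp_\C$.  Second, for $\gso(\infty,\infty)$ the paper simply asserts that each of the three complex flags of Theorem~\ref{soc-flag-anomaly} has a real form, whereas you correctly observe that this requires the induced form on $L_\R^\perp/L_\R$ to have signature $(1,1)$: if that two--dimensional quotient is definite then $\tau$ swaps $M_1$ and $M_2$ rather than fixing them, and the real flag is in fact unique.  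Closed isotropic $L_\R$ with definite $L_\R^\perp/L_\R$ do occur in $V_\R$ of signature $(\infty,\infty)$, so your refinement is a genuine correction to the ``real analogue'' as literally stated.  Your plan to rerun the argument of Theorem~\ref{soc-flag-anomaly} over $\R$ using $\gso(\infty,\infty) \cong \Lambda^2(V_\R)$ is sound and will produce the correct statement; the paper's shortcut via complexification works once one adds the signature hypothesis.
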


\begin{proof}
If there is a unique taut couple or self--taut generalized flag associated to $\gp_\C$, then the uniqueness of the taut couple or self--taut generalized flag associated to $\gp_\R$ is immediate from the proof of Theorem~\ref{real-parab}.  If $\gg_\R \cong \gso(\infty,\infty)$, then each of the $\C$--generalized flags of Theorem~\ref{soc-flag-anomaly} has a real form, hence the real analogue of Theorem~\ref{soc-flag-anomaly} holds in this case.  Now suppose that $\gg_\R \cong \gso^*(2\infty)$ and the self--taut generalized flag $\cF$ associated to $\gp_\C$ has a closed isotropic subspace $L$ with $\dim_\C (L^\perp / L) =2$.  The proof of Theorem~\ref{real-parab} shows that $L$ and $L^\perp$ are $\H$--subspaces, and the quaternionic codimension of $L$ in $L^\perp$ is $1$.  Hence the $\H$--generalized flag associated to $\gp_\R$ has no subspaces strictly between $L$ and $L^\perp$, which forces it to be unique.
\end{proof}

\begin{remark}\label{finite-unitary}{\rm
Theorem~\ref{real-parab} simplifies sharply in the $\gsu(p,\infty)$, $\gso(p,\infty)$, $\gsp(p,\infty)$, and $\gu(p,\infty)$ cases when $p \in \Z_{\geqq 0}$.
Because $p$ is the maximal dimension of an isotropic
subspace of $V_\R$ (and thus the maximal codimension of a closed coisotropic subspace), a self--taut generalized flag must be finite.  No infinite trace conditions arise.
The stabilizer of such a self--taut generalized flag coincides with the joint stabilizer of its
isotropic subspaces and at most one non--closed coisotropic subspace.  (The perpendicular complement of the single non--closed coisotropic subspace, when it occurs, is the largest isotropic subspace.)
}
\hfill $\diamondsuit$
\end{remark}

\begin{remark}\label{sec3rem}{\rm
The special case where the subalgebra of $\gg_\C$ (or $\gg_\R$) is 
a direct limit of parabolics of the $\gg_{n,\C}$ (or the $\gg_{n,\R}$)
has been studied in a number of contexts such as \cite{DPW} and \cite{NRW}, 
and in particular in connection with  direct limits of principal 
series representations \cite{W5}.  Any direct limit of
parabolic subalgebras is a parabolic subalgebra in the general sense of this paper.
}
\hfill $\diamondsuit$
\end{remark} 

\section{A Geometric Interpretation} \label{sec5}
\setcounter{equation}{0}

Our geometric interpretation is modeled on a criterion from the  finite--dimensional case.  
Let $G_\C$ be a finite--dimensional classical Lie ind--group, and $G_\R$ a real form of $G_\C$.
Let $P \subset G_\C$ be a parabolic subgroup, and
let $Z := G_\C/P$ be the corresponding flag manifold. 
Then $G_\R$ acts on $Z$ as a subgroup of $G_\C$.  One knows \cite[Theorem 3.6]{W2}
that there is a unique closed $G_\R$--orbit 
$F$ on $Z$, and that 
$\dim_\R F \geqq \dim_\C Z$,
with equality precisely when $F$ is a real form of $Z$.  Thus
real and complex dimensions satisfy $\dim_\R F = \dim_\C Z$
if and only if $F$ is a totally real submanifold of $Z$.
This is the motivation for our geometric interpretation, for 
$F$ is a totally real submanifold of $Z$ if and only if $G_\R$
has a parabolic subgroup whose complexification is 
$G_\C$--conjugate to $P$.  Then that real parabolic subgroup is
the $G_\R$--stabilizer of a point of the closed orbit $F$.
Here note that if any $G_\R$--orbit in $Z$ is totally real then
it has real dimension $\leqq \dim_\C Z$, so it must be the closed orbit.

Let now $G_\C$ be one of the Lie ind--groups $GL(\infty;\C)$, $SL(\infty;\C)$, 
$SO_0(\infty;\C)$ and $Sp(\infty;\C)$.  Fix an exhaustion of $G_\C$ by 
classical connected finite--dimensional subgroups $G_{n,\C}$, and let 
$G_{n,\R}$ be nested real forms of $G_{n,\C}$.  Then 
$G_\R := \varinjlim G_{n,\R}$ is a real form of $G_\C$.
Let $P_\C$ be a parabolic subgroup of $G_\C$.  As 
described in Section \ref{sec2C},
$P_\C$ is defined by infinite determinant conditions on the stabilizer 
$\widetilde{P_\C}$ of a taut couple 
or a self--taut generalized flag.  Here $\widetilde{P_\C}$ is the normalizer
of $P_C$ in $G_C$.
We use the usual notation for the Lie algebras of all these Lie ind--groups.

\begin{lemma} \label{totreal1}
Consider the homogeneous space $Z = G_\C/\widetilde{P_\C}$.  
Write $z_0$ for the identity coset $1\cdot\widetilde{P_\C}$ in $Z$ and 
define $Z_n = G_{n,\C}(z_0)$.  Then each $Z_n$ 
is a (finite--dimensional) complex homogeneous space and 
$Z$ is the complex ind--manifold $\varinjlim Z_n$ (direct limit 
in the category of complex manifolds and holomorphic maps.)
\end{lemma}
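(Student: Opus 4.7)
The plan is to identify each $Z_n$ with the quotient of $G_{n,\C}$ by a finite-dimensional parabolic subgroup coming from restricting the generalized flag structure to the level-$n$ subspaces, and then to check that the inclusions $Z_n \hookrightarrow Z_{n+1}$ are closed holomorphic embeddings exhibiting $Z$ as a direct limit.

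First I would fix, compatibly with the exhaustion $G_\C = \varinjlim G_{n,\C}$, a nested family of finite-dimensional subspaces $V_n \subset V$ (and $W_n \subset W$ in the two-representation case) on which each $G_{n,\C}$ acts classically and off of which it acts trivially. For the taut couple $(\cF, \cG)$, or the self-taut flag $\cF$, associated to $\widetilde{P_\C}$, I would form the induced finite flags $\cF_n := \{F \cap V_n : F \in \cF\}$ and analogously $\cG_n$. A direct check shows that an element of $G_{n,\C}$ preserves $\cF$ if and only if it preserves $\cF_n$: acting trivially on a complement of $V_n$, any such element $g$ satisfies $g(F) = g(F \cap V_n) + (F \cap V_n^c)$, so $g(F) = F$ is equivalent to $g(F \cap V_n) = F \cap V_n$. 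Consequently
\[
G_{n,\C} \cap \widetilde{P_\C} = \{g \in G_{n,\C} : g(\cF_n) = \cF_n\} =: P_{n,\C},
\]
and (in the symmetric or symplectic settings, noting that $\cF_n$ then consists of subspaces which are isotropic or coisotropic with respect to the induced nondegenerate form on $V_n$) $P_{n,\C}$ is a parabolic subgroup of $G_{n,\C}$. Hence $Z_n = G_{n,\C}/P_{n,\C}$ is a finite-dimensional complex flag manifold.

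Next I would establish the set-theoretic equality $Z = \bigcup_n Z_n$. Any point $z \in Z$ has the form $g \cdot z_0$ with $g \in G_\C = \bigcup_n G_{n,\C}$, so $g \in G_{m,\C}$ for some $m$ and hence $z \in Z_m$. The inclusion $Z_n \subset Z_{n+1}$ then comes from the compatibility $P_{n,\C} = G_{n,\C} \cap P_{n+1,\C}$, giving an injection
\[
Z_n = G_{n,\C}/P_{n,\C} \;\hookrightarrow\; G_{n+1,\C}/P_{n+1,\C} = Z_{n+1}.
\]
The inclusion $G_{n,\C} \hookrightarrow G_{n+1,\C}$ is a closed holomorphic embedding of complex algebraic groups, and the quotient $G_{n,\C}/P_{n,\C}$ is a closed subvariety of $G_{n+1,\C}/P_{n+1,\C}$ (realized, for example, as the $G_{n,\C}$-orbit of the base point in the projective embedding of the larger flag variety). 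Thus each transition map $Z_n \hookrightarrow Z_{n+1}$ is a closed holomorphic embedding, and the direct limit $\varinjlim Z_n$ in the category of complex manifolds with holomorphic maps is a well-defined complex ind-manifold.

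Finally, I would verify that the natural map $\varinjlim Z_n \to Z$ is a bijection of sets (just established) and that it is a morphism in the appropriate category: the $G_\C$-action on $Z$ restricted to $G_{n,\C}$ gives precisely the orbit map onto $Z_n$, so the transition structure coincides with the one induced from $G_\C$. This identifies $Z$ with the ind-manifold $\varinjlim Z_n$. The main obstacle is the first step, namely verifying that $G_{n,\C} \cap \widetilde{P_\C}$ really is parabolic in $G_{n,\C}$; in the orthogonal and symplectic settings this requires checking that the induced flag $\cF_n$ in $V_n$ consists of subspaces whose isotropy/coisotropy type in $V$ is preserved after intersecting with $V_n$, which follows from the self-taut condition on $\cF$ as reviewed in Remark \ref{selftaut_is_biiso}.
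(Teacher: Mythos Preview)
Your argument reaches the right conclusion, but it takes a substantially longer route than the paper and, in doing so, introduces an unnecessary obstacle. The lemma only claims that each $Z_n$ is a \emph{complex homogeneous space}; for this it is enough that the isotropy group $G_{n,\C}\cap\widetilde{P_\C}$ be a closed complex Lie subgroup of $G_{n,\C}$. That is immediate: $\widetilde{P_\C}$ is a complex subgroup of $G_\C$ (being the stabilizer of a complex generalized flag), so its intersection with each $G_{n,\C}$ is a closed complex subgroup, and the quotient $Z_n = G_{n,\C}/(G_{n,\C}\cap\widetilde{P_\C})$ is automatically a finite--dimensional complex manifold. The paper's proof is essentially this one observation, together with a citation of \cite{NRW} for the fact that the resulting system $\{Z_n\hookrightarrow Z_{n+1}\}$ has a direct limit in the category of complex manifolds and holomorphic maps.

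By contrast, you aim to show that $G_{n,\C}\cap\widetilde{P_\C}$ is actually \emph{parabolic} in $G_{n,\C}$, which is strictly stronger than what the lemma asserts and is what you flag as the ``main obstacle.'' In the $\gso$ and $\gsp$ cases this requires, as you note, that each $F\cap V_n$ be isotropic or coisotropic for the restricted form on $V_n$; the isotropic case is clear, but the coisotropic case depends on a compatibility between the exhaustion $\{V_n\}$ and the flag $\cF$ that you have not made precise. None of this is needed: once you drop the parabolicity claim, the rest of your outline (identifying $Z_n$ with $G_{n,\C}/(G_{n,\C}\cap\widetilde{P_\C})$, checking $Z=\bigcup_n Z_n$, and observing that the transition maps are holomorphic embeddings) goes through verbatim and matches the paper's argument.
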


\begin{proof} $\widetilde{P_\C}$ is a complex subgroup of $G_\C$, and
$\widetilde{P_\C} = \varinjlim (G_{n,\C}\cap \widetilde{P_\C})$.  Each
finite--dimensional orbit $Z_n$ is a complex manifold because 
$G_{n,\C}\cap \widetilde{P_\C}$ is a complex subgroup of $G_{n,\C}$, and
the inclusions $Z_n \hookrightarrow Z_{n+1}$ are holomorphic embeddings.  
As in \cite{NRW} now $Z = \varinjlim Z_n$ is a strict direct limit in the
category of complex manifolds and holomorphic maps.  In other words a
function $f$ on an open subset $U \subset Z$ is holomorphic if and only if
each of the $f|_{U\cap Z_n} : U\cap Z_n \to \C$ is holomorphic.  Note that
separately holomorphic functions on open subsets $U \subset Z$ are jointly 
holomorphic because each $f|_{U\cap Z_n}$ is jointly holomorphic (and thus
continuous) by Hartogs' Theorem.
\end{proof}

\begin{lemma} \label{totreal2}
Let $Y = G_\R(z_0)$ and $Y_n = G_{n,\R}(z_0)$.  Then $Y$ is
a totally real submanifold of $Z$ if and only if each $Y_n$ is a totally
real submanifold of $Z_n$.
\end{lemma}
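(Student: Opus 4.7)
The plan is to reduce the totally real condition to the base point $z_0$ by equivariance, identify the relevant tangent spaces as strict direct limits of their finite--level counterparts, and transfer the intersection condition level by level.

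By $G_\R$--equivariance and the $G_\R$--invariance of the complex structure on $Z$, the orbit $Y = G_\R(z_0)$ is totally real in $Z$ if and only if $T_{z_0}Y \cap J(T_{z_0}Y) = 0$, where $J$ denotes the complex structure operator on $T_{z_0}Z$. The same reduction to $z_0$ applies to each pair $Y_n \subset Z_n$ via $G_{n,\R}$--equivariance, so we need only compare the intersection conditions at $z_0$ as $n$ varies.

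By Lemma~\ref{totreal1}, the inclusions $Z_n \hookrightarrow Z_{n+1}$ are holomorphic embeddings and $Z = \varinjlim Z_n$ as a complex ind--manifold. Hence each $T_{z_0}Z_n$ is a complex subspace of $T_{z_0}Z_{n+1}$, $T_{z_0}Z = \varinjlim T_{z_0}Z_n$, and $J$ restricts to the complex structure on every finite--level tangent space; in particular $J$ preserves each $T_{z_0}Z_n$. Similarly $T_{z_0}Y = \bigcup_n T_{z_0}Y_n$ with $T_{z_0}Y_n \subset T_{z_0}Z_n$ for each $n$, using that $G_\R = \varinjlim G_{n,\R}$ and that the stabilizer of $z_0$ in $G_{n,\R}$ is the intersection of $G_{n,\R}$ with the stabilizer of $z_0$ in $G_\R$.

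The forward implication then follows from the inclusion $T_{z_0}Y_n \cap J(T_{z_0}Y_n) \subset T_{z_0}Y \cap J(T_{z_0}Y) = 0$. For the converse, given $v \in T_{z_0}Y \cap J(T_{z_0}Y)$ with $v = Jv'$ and $v' \in T_{z_0}Y$, pick $n$ large enough that both $v$ and $v'$ lie in $T_{z_0}Y_n$; then $v = Jv' \in T_{z_0}Y_n \cap J(T_{z_0}Y_n) = 0$. The only point requiring care is that the complex structure at the direct limit level is compatible with the complex structures at each finite level, but this is exactly what the holomorphicity part of Lemma~\ref{totreal1} guarantees, so no substantive obstacle arises.
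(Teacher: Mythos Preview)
Your proof is correct and follows essentially the same approach as the paper's: reduce to the base point by equivariance, use that $J$ preserves each finite--level tangent space (from the holomorphicity in Lemma~\ref{totreal1}), and invoke $T_{z_0}Y = \varinjlim T_{z_0}Y_n$ to transfer the intersection condition in both directions. Your write--up is in fact more explicit than the paper's, which compresses the forward and converse implications into the single remark that the assertion follows from $T_\R = \varinjlim T_{n,\R}$.
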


\begin{proof}
Let $J$ denote the complex structure operator
for $Z$, linear transformation of square $-\mathrm{Id}$ on the complexified tangent
space $T:=T_{z_0,\C}(Z)$ of $Z$ at $z_0$.  Then $J$ preserves each of the
$T_n:=T_{z_0,\C}(Z_n)$.  Now $Y$ is totally real if and only if the real
tangent space $T_\R := T_{z_0}(Z)$ satisfies $J(T_\R) \cap T_\R =0$, and
$Y_n$ is totally real if and only if the real tangent space $T_{n,\R}
:= T_{z_0}(Z_n)$ satisfies $J(T_{n,\R}) \cap T_{n,\R} = 0$.  Since
$T_\R = \varinjlim T_{n,\R}$ the assertion follows.
\end{proof}

\begin{lemma} \label{totreal3}
$G_{n,\R}\cap \widetilde{P_\C}$ is a real form of 
$G_{n,\C}\cap \widetilde{P_\C}$ if and only if $Y_n$ is totally real in $Z_n$.
\end{lemma}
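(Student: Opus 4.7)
The plan is to reduce the equivalence to a computation on tangent spaces at the base point $z_0$. Identify $T_{z_0}Z_n$ with the complex vector space $\gg_{n,\C}/\gh_\C$, where $\gh_\C := \gg_{n,\C}\cap\widetilde{\gp_\C}$ is the Lie algebra of $H_\C := G_{n,\C}\cap\widetilde{P_\C}$, and identify $T_{z_0}Y_n$ with the real vector space $\gg_{n,\R}/\gh_\R$, where $\gh_\R := \gg_{n,\R}\cap\widetilde{\gp_\C} = \gg_{n,\R}\cap\gh_\C$ is the Lie algebra of $H_\R := G_{n,\R}\cap\widetilde{P_\C}$. The inclusion $\gg_{n,\R}\hookrightarrow\gg_{n,\C}$ descends to an $\R$--linear injection $\iota: \gg_{n,\R}/\gh_\R \to \gg_{n,\C}/\gh_\C$ (injectivity is forced by $\gg_{n,\R}\cap\gh_\C = \gh_\R$), and this is exactly the differential at $z_0$ of the orbit inclusion $Y_n\hookrightarrow Z_n$. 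Since being a real form is determined at the Lie algebra level (up to identity components, which we handle at the end), it is enough to prove the equivalence: $\gh_\R$ is a real form of $\gh_\C$ iff $Y_n$ is totally real in $Z_n$.

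Next I translate ``totally real'' into algebra. Let $J$ denote multiplication by $i$ on $T_{z_0}Z_n$. By $G_{n,\R}$--homogeneity of $Y_n$, the totally real condition is equivalent to $\iota(T_{z_0}Y_n)\cap J\iota(T_{z_0}Y_n) = 0$ at $z_0$ alone. An element of this intersection corresponds to a pair $x,y\in\gg_{n,\R}$ with $x - iy \in \gh_\C$. Using the real--form splitting $\gg_{n,\C} = \gg_{n,\R} \oplus i\gg_{n,\R}$, every $h\in\gh_\C$ has a unique decomposition $h = a + ib$ with $a,b\in\gg_{n,\R}$, and the vanishing of the intersection becomes the requirement that $a \in \gh_\R$ for every such $h$; applying the same to $ih \in \gh_\C$ also forces $b\in\gh_\R$. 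Thus totally real is equivalent to $\gh_\C \subseteq \gh_\R \oplus i\gh_\R$, and since the reverse inclusion is automatic from $\gh_\R \subset \gh_\C$, the condition reduces to $\gh_\C = \gh_\R \otimes_\R \C$, i.e.\ $\gh_\R$ is a real form of $\gh_\C$.

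I do not anticipate a serious obstacle; the entire argument hinges on the single observation that the real--form decomposition $\gg_{n,\C} = \gg_{n,\R} \oplus i\gg_{n,\R}$ restricts to $\gh_\C = \gh_\R \oplus i\gh_\R$ precisely when $\gh_\C$ is stable under the complex conjugation $\tau$ of $\gg_{n,\C}$ over $\gg_{n,\R}$. The failure of $Y_n$ to be totally real is measured exactly by the $\tau$--non--invariant part of $\gh_\C$, namely the quotient $\gh_\C/(\gh_\R\otimes_\R\C)$. The passage from the Lie algebra equivalence to the Lie group equivalence is routine: $H_\R$ is closed in $G_{n,\R}$, and its identity component equals the connected real Lie subgroup of $H_\C$ with Lie algebra $\gh_\R$, so $H_\R$ is a real form of $H_\C$ (in the identity--component sense appropriate here) precisely when $\gh_\R$ is a real form of $\gh_\C$.
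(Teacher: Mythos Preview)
Your argument is correct, but it takes a different route from the paper's. The paper's proof splits into two short pieces: for the implication ``totally real $\Rightarrow$ real form'' it uses pure dimension counting, namely $\dim_\R Y_n \leqq \dim_\C Z_n$ forces $\dim_\R \gh_\R \geqq \dim_\C \gh_\C$ and hence equality; for the converse it picks a real complement $\gm_{n,\R}$ to $\gh_{n,\R}$ in $\gg_{n,\R}$ and observes that $\gm_{n,\R}\otimes\C$ then complements $\gh_\C$ in $\gg_{n,\C}$, making $Y_n$ visibly totally real. Your approach instead proves both directions at once by unwinding the intersection $\iota(T_{z_0}Y_n)\cap J\,\iota(T_{z_0}Y_n)$ directly and showing it vanishes precisely when the decomposition $\gg_{n,\C}=\gg_{n,\R}\oplus i\gg_{n,\R}$ restricts to $\gh_\C$, i.e.\ when $\gh_\C$ is $\tau$--stable. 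The paper's argument is shorter and requires no computation beyond counting; yours is more explicit and has the virtue of identifying the obstruction (the $\tau$--non--invariant part of $\gh_\C$) concretely, which resonates with the $\tau$--stability theme running through Section~\ref{sec3} and Theorem~\ref{characterization}.
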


\begin{proof}
Denote $H_{n,\C} = G_{n,\C}\cap \widetilde{P_\C}$ and 
$H_{n,\R} = G_{n,\R}\cap \widetilde{P_\C}$.
Suppose first that $Y_n$ is totally real in $Z_n$.  Then 
$\dim_\R G_{n,\R} - \dim_\R H_{n,\R} = \dim_\R Y_n \leqq \dim_\C Z_n
= \dim_\C G_{n,\C} - \dim_\C H_{n,\C}$, so $\dim_\R H_{n,\R} \geqq
\dim_\C H_{n,\C}$, forcing $\dim_\R H_{n,\R} = \dim_\C H_{n,\C}$.  
Now $H_{n,\R}$ is a real form of $H_{n,\C}$.

Conversely suppose that $H_{n,\R}$ is a real form of $H_{n,\C}$.  Then
the real tangent space to $Y_n$ at $z_0$ is represented by any vector
space complement $\gm_{n,\R}$ to $\gh_{n,\R}$ in $\gg_{n,\R}$, while the
real tangent space to $Z_n$ at $z_0$ is represented by the vector
space complement $\gm_{n,\R}\otimes\C$ to $\gh_{n,\C}$ in $\gg_{n,\C}$,
so $Y_n$ is totally real in $Z_n$. 
\end{proof}

Putting all this together, we have our geometric characterization of
parabolic subgroups of the classical real Lie ind--groups.

\begin{theorem}\label{characterization}
Fix a parabolic subgroup $P_\C \subset G_\C$ and consider the 
flag ind--manifold $Z = G_\C/\widetilde{P_\C}$.  Then $P_\C\cap G_\R$
is a parabolic subgroup of $G_\R$ if and only if the following two conditions hold:

{\rm (i)} the orbit $G_\R(z_0)$ of the base point $z_0 = \widetilde{P_\C}$ 
is a totally real submanifold of $Z$;

{\rm (ii)} the set of all infinite trace
conditions on $\widetilde{\gp_\C}$ satisfied by $\gp_\C$ is stable under the
complex conjugation $\tau$ of $\gg_\C$ over $\gg_\R$.
\end{theorem}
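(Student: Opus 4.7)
The plan is to reduce the entire equivalence to a single algebraic condition, namely $\tau(\gp_\C) = \gp_\C$, and then translate (i) and (ii) separately into this statement. By the definition of a real parabolic subalgebra recalled at the start of Section~\ref{sec3}, $P_\C \cap G_\R$ is a parabolic subgroup of $G_\R$ if and only if $(\gp_\C \cap \gg_\R) \otimes_\R \C = \gp_\C$, which in turn is the same as $\tau(\gp_\C) = \gp_\C$.

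For condition~(i), I would apply Lemmas~\ref{totreal1}, \ref{totreal2}, and~\ref{totreal3} in succession, so that (i) becomes the statement that $G_{n,\R} \cap \widetilde{P_\C}$ is a real form of $G_{n,\C} \cap \widetilde{P_\C}$ for every $n$. At the Lie algebra level this reads: $\gg_{n,\R} \cap \widetilde{\gp_\C}$ is a real form of $\gg_{n,\C} \cap \widetilde{\gp_\C}$ for every $n$. Passing to the direct limit, and using both that each $\gg_{n,\C}$ is itself $\tau$-stable (since $\gg_{n,\R}$ is its real form) and that $\widetilde{\gp_\C} = \varinjlim (\gg_{n,\C} \cap \widetilde{\gp_\C})$, this is equivalent to $\tau(\widetilde{\gp_\C}) = \widetilde{\gp_\C}$.

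For condition~(ii), assuming~(i) holds so that $\tau$ restricts to a conjugate-linear involution of $\widetilde{\gp_\C}$, one obtains an induced action $f \mapsto \overline{f \circ \tau}$ of $\tau$ on the space of Lie algebra homomorphisms $\widetilde{\gp_\C} \to \C$. Since $\gp_\C$ is the joint kernel of the maximal family $\Tr^{\gp_\C}$, its $\tau$-image is the joint kernel of the $\tau$-translate of $\Tr^{\gp_\C}$; by maximality of $\Tr^{\gp_\C}$, one has $\tau(\gp_\C) = \gp_\C$ if and only if $\tau$ stabilizes $\Tr^{\gp_\C}$, which is precisely~(ii). Conversely, $\tau(\gp_\C) = \gp_\C$ automatically forces $\tau(\widetilde{\gp_\C}) = \widetilde{\gp_\C}$ because any Lie algebra automorphism preserves the normalizer, so~(i) follows for free in that direction. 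Combining the two translations completes the proof.

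The main obstacle I anticipate is the direct-limit step in the translation of condition~(i): one has to argue carefully that the direct limit of finite-dimensional real forms $\gg_{n,\R} \cap \widetilde{\gp_\C}$ is a real form of the infinite-dimensional limit $\widetilde{\gp_\C}$, and conversely that $\tau$-stability at the limit descends to each finite stage. Both directions rest on the compatibility of the exhaustion $\gg_\C = \varinjlim \gg_{n,\C}$ with $\tau$, and should go through cleanly once this compatibility is made explicit. The subsequent trace-condition bookkeeping in the translation of~(ii) is then a straightforward formal argument using the maximality of $\Tr^{\gp_\C}$.
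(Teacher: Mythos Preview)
Your overall architecture matches the paper's: translate (i) via Lemmas~\ref{totreal1}--\ref{totreal3} into an algebraic $\tau$--stability statement for the normalizer, and handle (ii) by bookkeeping with $\Tr^{\gp_\C}$. The normalizer argument (that $\tau(\gp_\C)=\gp_\C$ forces $\tau(\widetilde{\gp_\C})=\widetilde{\gp_\C}$) and the direct--limit compatibility you flag are both fine.

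There is, however, a real gap at the very first step. You assert that by definition ``$P_\C\cap G_\R$ is parabolic in $G_\R$'' is equivalent to $(\gp_\C\cap\gg_\R)\otimes_\R\C=\gp_\C$, i.e.\ to $\tau(\gp_\C)=\gp_\C$. That is not what the definition says. The definition only tells you that $\gp_\C\cap\gg_\R$ is parabolic if and only if its complexification, which is $\gp_\C\cap\tau(\gp_\C)$, is a parabolic subalgebra of $\gg_\C$. A priori $\gp_\C\cap\tau(\gp_\C)$ could be a parabolic strictly smaller than $\gp_\C$ (two distinct parabolics can have parabolic intersection), so the forward implication ``$\gp_\C\cap\tau(\gp_\C)$ parabolic $\Rightarrow$ $\tau(\gp_\C)=\gp_\C$'' is exactly the nontrivial content you need, and you have not supplied it. Everything downstream in your argument is conditioned on this unproved reduction.

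The paper's proof does not attempt this reduction. For the forward direction it instead passes to the normalizer: from $G_\R\cap P_\C$ parabolic one gets $G_\R\cap\widetilde{P_\C}$ parabolic (any overalgebra of a parabolic is parabolic), and then invokes Lemmas~\ref{totreal2}--\ref{totreal3} at the level of $\widetilde{P_\C}$ to obtain (i); (ii) follows because the real trace conditions complexify to $\Tr^{\gp_\C}$. If you want to salvage your route, you must actually prove that $\gp_\C\cap\tau(\gp_\C)$ parabolic forces $\gp_\C=\tau(\gp_\C)$; this requires the flag description (Theorems~\ref{complex-parab} and~\ref{soc-flag-anomaly}) and is not a one--line consequence of the definition.
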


\begin{proof}
Lemmas \ref{totreal2} and \ref{totreal3} show that the orbit
$G_\R(z_0)$ is a totally real submanifold of $Z$
if and only if $G_\R \cap \widetilde{P_\C}$ is parabolic in $G_\R$.

If $G_\R \cap P_\C$ is parabolic in $G_\R$ then $G_\R \cap \widetilde{P_\C}$ 
is parabolic
because it contains $G_\R \cap P_\C$, and the corresponding real set of 
infinite trace 
conditions complexifies to the set of infinite trace conditions by which
$\gp_\C$ is defined from $\widetilde{\gp_\C}$.  Thus (i) and (ii) follow.

Conversely assume (i) and (ii).  From (i), $G_\R \cap \widetilde{P_\C}$
is a parabolic subgroup of $G_\R$, and from (ii),
$\{x \in \gg_\R \cap \widetilde{\gp_\C} \mid x \text{ satisfies }
\Tr^{\gp_\C}\}\otimes \C = \{x \in \widetilde{\gp_\C} \mid x \text{ satisfies }
\Tr^{\gp_\C}\}$, where $\Tr^{\gp_\C}$ denotes the set of infinite trace 
conditions described in Definition \ref{def-trace-cond}.
\end{proof}

{\small
\begin {thebibliography} {X}

\bibitem{B}
A. A. Baranov,
\emph{Finitary simple Lie algebras},
J. Algebra {\bf 219} (1999), 299--329.

\bibitem{BS}
A. A. Baranov \& H. Strade,
\emph{Finitary Lie algebras}, 
J. Algebra {\bf 254} (2002), 173--211.

\bibitem{DC1}
E. Dan-Cohen,
\emph{Borel subalgebras of root-reductive Lie algebras},
 J. Lie Theory {\bf 18} (2008), 215--241.

\bibitem{DC2}
E. Dan-Cohen \& I. Penkov,
\emph{Parabolic and Levi subalgebras of finitary Lie algebras},
to appear.

\bibitem{DP1} I. Dimitrov, I. Penkov, 
\emph{Weight modules of direct limit Lie algebras}, 
Intern. Math. Res. Notices 1999, No. 5, 223-249.

\bibitem{DP2} I. Dimitrov, I. Penkov, 
\emph{Borel subalgebras of $\gl(\infty)$}, 
Resenhas IME-USP {\bf 6} (2004), 153--163.

\bibitem{DP3} I. Dimitrov, I. Penkov,
\emph{Locally semisimple and maximal subalgebras of the finitary Lie 
algebras $gl(\infty)$, $sl(\infty)$, $so(\infty)$, and $sp(\infty)$}, 
\{arXiv: math/0809.2536.\}

\bibitem{DPW}
I. Dimitrov, I. Penkov \& J. A. Wolf,
\emph{A Bott--Borel--Weil theory for direct limits of algebraic groups},
Amer. J. Math. {\bf 124} (2002), 955--998.

\bibitem{M}
G. W. Mackey,
\emph{On infinite--dimensional linear spaces},  
Trans. Amer. Math. Soc. {\bf 57}  (1945), 155--207.

\bibitem{NRW}
L. Natarajan, E. Rodr\' \i guez-Carrington \& J. A. Wolf,
\emph{The Bott-Borel-Weil theorem for direct limit groups},
Trans. Amer. Math. Soc. {\bf 124} (2002), 955--998.

\bibitem{W2}
J. A. Wolf,
\emph{The action of a real semisimple Lie group on a complex
manifold, {\rm I}: Orbit structure and holomorphic arc components},
Bull. Amer. Math. Soc. {\bf 75} (1969), 1121--1237.

\bibitem{W5}
J. A. Wolf,
\emph{Principal series representations of direct limit groups},  Compositio
Mathematica, {\bf 141} (2005), 1504--1530. \{arXiv: math/0402283
(math.RT, math.FA).\}

\end {thebibliography}
\vskip .1 in

\centerline{\begin{tabular}{lll}
E.D.-C.: & I.P.: & J.A.W.: \\
Department of Mathematics & School of Engineering and Science & Department of Mathematics \\
Rice University & Jacobs University Bremen  & University of California  \\
6100 S. Main St. & Campus Ring 1 & Berkeley, CA 94720--3840 \\
Houston TX 77005-1892 & 28759 Bremen, Germany &  \\
{\tt edc@rice.edu} & {\tt i.penkov@jacobs-university.de} & {\tt jawolf@math.berkeley.edu}
\end{tabular}}
}
\end{document}